\newtheorem{thm}{Theorem}
\newtheorem{lem}[thm]{Lemma}
\newtheorem{con}[thm]{Conjecture}
\newcommand{\defeq}{\stackrel{\mathrm{def}}{=}}
\newcommand{\B}{\mathcal{B}}
\newcommand{\C}{\mathcal{C}}
\newcommand{\D}{\mathcal{D}}
\newcommand{\ZZ}{\mathbb{Z}}
\newcommand{\RR}{\mathbb{R}}
\newcommand{\QQ}{\mathbb{Q}}
\newcommand{\CC}{\mathbb{C}}
\newcommand{\oone}{\mathbb{1}}
\newcommand{\p}{\mathsf{P}}
\newcommand{\NP}{\mathsf{NP}}
\newcommand{\shP}{\#\mathsf{P}}
\newcommand{\CSP}{\mathsf{CSP}}
\newcommand{\BQP}{\mathsf{BQP}}
\newcommand{\F}{\mathcal{F}}
\newcommand{\x}{{\mathbf{x}}}
\newcommand{\N}{\mathcal{N}}
\DeclareMathOperator{\Irr}{Irr}
\DeclareMathOperator{\End}{End}
\begin{document}


\title{Towards a complexity-theoretic dichotomy for TQFT invariants}

\author{Nicolas Bridges}
\address{Department of Mathematics \\ 150 N University St \\ Purdue University \\ West Lafayette, IN \\ 47907}
\email{bridge18@purdue.edu}

\author{Eric Samperton}
\address{Departments of Mathematics and Computer Science \\ 150 N University St \\ Purdue University \\ West Lafayette, IN \\ 47907}
\email{eric@purdue.edu}

\thanks{Both authors supported in part by NSF CCF grant 2330130.}

\date{\today}

\begin{abstract}
We show that for any fixed $(2+1)$-dimensional TQFT over $\mathbb{C}$ of either Turaev-Viro-Barrett-Westbury or Reshetikhin-Turaev type, the problem of (exactly) computing its invariants on closed 3-manifolds is either solvable in polynomial time, or else it is $\#\mathsf{P}$-hard to (exactly) contract certain tensors that are built from the TQFT's fusion category.
Our proof is an application of a dichotomy result of Cai and Chen [J.~ACM, 2017] concerning weighted constraint satisfaction problems over $\mathbb{C}$.
We leave for future work the issue of reinterpreting the conditions of Cai and Chen that distinguish between the two cases (\emph{i.e.}~$\#\mathsf{P}$-hard tensor contractions vs.~polynomial time invariants) in terms of fusion categories.
We expect that with more effort, our reduction can be improved so that one gets a dichotomy directly for TQFTs' invariants of 3-manifolds rather than more general tensors built from the TQFT's fusion category.

\end{abstract}

\maketitle

\section{Introduction}
\label{sec:intro}
\subsection{Main results}
\label{ss:results}
Quantum computation---especially \emph{topological} quantum computation---motivates a number of complexity-theoretic questions concerning TQFT invariants of manifolds, particularly in dimensions 2 and 3.
One of the most central is to classify ``anyonic systems" according to whether or not they are powerful enough to (approximately) encode arbitrary quantum circuits over qubits.
Anyons that are powerful in this way are important because (in theory) it should be possible to build fault tolerant quantum computers using them \cite{Kitaev,FreedmanLarsenWang-universal}.
We refer the reader to \cite{RowellWang-review} for a broad review of the mathematical side of these matters and Subsection \ref{ss:implications} for more discussion.
For now, we simply note that the Property F conjecture of Naidu and Rowell is currently the only concrete, published formulation of a proposed (partial) answer to this classification question that we know.
The conjecture is surprisingly easy to formulate: the possible braidings of $n$ copies of a simple anyon $X$ in a unitary modular tensor category $\B$ generate  only finitely many unitaries for each $n$ (and, hence, are not ``braiding universal" for quantum computation) if and only if the square of the quantum dimension of $X$ is an integer $d_X^2 \in\ZZ$ \cite{NaiduRowell-F}.

In this work, we will not attack the Property F conjecture directly.
However, our main result has a similar spirit and shares the same motivations.
See Subsection \ref{ss:implications} for some discussion.

\begin{thm}\label{thm:main}
\begin{enumerate}[(a)]
\item Fix a spherical fusion category $\C$ over $\CC$, presented skeletally with all data given as algebraic numbers over $\QQ$.
Then $\#\CSP(\F_\C)$--the problem of contracting tensor networks defined from $\C$---is either solvable in polynomial time or $\shP$-hard.
Moreover, if $M$ is a closed, oriented, triangulated $3$-manifold (treated as computational input), then either the problem of computing the Turaev-Viro-Barrett-Westbury invariant $|M|_{\C} \in \CC$ is solvable in (classical) polynomial time or $\#\CSP(\F_\C)$ is $\shP$-hard.
\item Fix a modular fusion category $\B$ over $\CC$, presented skeletally with all data given as algebraic numbers over $\QQ$.
Then $\#\CSP(\F_\B)$--the problem of contracting tensor networks built from $\B$---is either solvable in polynomial time or $\shP$-hard.
Moreover, if $M$ is a closed, oriented 3-manifold encoded via a surgery diagram (treated as computational input), then either the problem of computing the Reshetikhin-Turaev invariant $\tau_{\B}(M) \in \CC$ is solvable in (classical) polynomial time or $\#\CSP(\F_\B)$ is $\shP$-hard. 
\end{enumerate}
\end{thm}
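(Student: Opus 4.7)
The plan is to apply the Cai-Chen dichotomy for weighted $\#\CSP$ problems over $\CC$ to the constraint language $\F_\C$, respectively $\F_\B$. Each language is a finite set of algebraic-valued tensors attached to the categorical data (for $\F_\C$, the $6j$-symbols and quantum dimensions of $\C$; for $\F_\B$, one also needs the $R$-matrices, twist, and cup/cap morphisms of $\B$). Cai-Chen then immediately gives that $\#\CSP(\F_\C)$ is either in $\p$ or $\shP$-hard, and analogously for $\#\CSP(\F_\B)$, proving the first sentence of parts (a) and (b). The second sentence follows once we exhibit a polynomial-time many-one reduction from each TQFT invariant to its associated $\#\CSP$ problem: tractability of the CSP then transfers to the invariant, while $\shP$-hardness of the CSP is exactly the other alternative in the dichotomy.

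For part (a), the reduction is essentially the state sum definition of the TVBW invariant. Given a triangulated closed oriented $3$-manifold $M$, I would form a CSP instance whose variables are the edges of the triangulation with domain $\Irr(\C)$, place a $6j$-symbol constraint at each tetrahedron, and place a quantum dimension constraint at each edge. The value of this $\#\CSP(\F_\C)$ instance equals $|M|_\C$ up to a global prefactor of the form $D^{-2V}$, where $V$ is the number of vertices of the triangulation and $D$ is the (fixed, algebraic) global dimension of $\C$; this prefactor is polynomial-time computable. For part (b), the reduction uses the Reshetikhin-Turaev state sum from a surgery link $L\subset S^3$: put $L$ in generic position so that its diagram decomposes into finitely many local pieces, sum over colorings of components by simple objects of $\B$ weighted by quantum dimensions, and interpret the resulting product of local tensors as a $\#\CSP(\F_\B)$ instance. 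The overall normalization involving the signature of the linking matrix and the number of components is likewise polynomial-time computable from the input.

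The hardest step is the up-front work of fixing, once and for all, what finite collections $\F_\C$ and $\F_\B$ are, so that they are rich enough to produce every state sum we need but also genuinely finite as constraint languages (which the Cai-Chen hypothesis requires). For Turaev-Viro this is fairly clean, since the $6j$-symbol and quantum dimension are each a single tensor, used once per tetrahedron or edge regardless of the input $M$. For Reshetikhin-Turaev one must choose a canonical decomposition of surgery link diagrams into a fixed finite library of local pieces (e.g.\ via a Morse-position convention), handle framing corrections without introducing new tensors, and maintain exact arithmetic in the algebraic number field in which all the entries of $\F_\B$ live---none of which is difficult in principle, but each of which requires careful bookkeeping.
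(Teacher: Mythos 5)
Your overall strategy is the same as the paper's: fix a finite constraint language $\F_\C$ (resp.~$\F_\B$), invoke the Cai--Chen dichotomy to get the ``either $\p$ or $\shP$-hard'' statement, and then exhibit a polynomial-time reduction $M\mapsto I_M$ so that tractability of $\#\CSP(\F_\C)$ or $\#\CSP(\F_\B)$ implies tractability of the invariant. Part (a) matches the paper closely; the only deviations are minor. You treat the $\D^{-2|V_M|}$ prefactor as an external multiplication rather than folding it into the instance (the paper adds a dummy vertex variable and a constraint $\D^{-2}$ supported on a sentinel value $*$, so that $Z(I_M)=|M|_\C$ exactly), and you omit the face/multiplicity variables and $3j$-symbols needed when $\C$ is not multiplicity-free; both are routine fixes.

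For part (b), however, there is a genuine gap, and it is precisely the part you wave away as ``careful bookkeeping.'' The difficulty is that $|L^{\mathrm{col}}|$ is \emph{not} already a sum-of-products of bounded-arity functions over a fixed finite domain, even after putting $L$ in plat/Morse position: slicing the diagram yields morphisms between tensor powers $X_{i_1}\otimes\cdots\otimes X_{i_k}$, and the Hom-space dimensions across a horizontal slice grow with the width of the diagram, so the ``local tensors'' at crossings, cups, and caps do not have legs living in a fixed finite domain. One must first resolve identities into fusion channels so that every internal leg is labelled by a simple object together with a bounded multiplicity label, and—more importantly—one needs a way to express the resulting closed colored planar trivalent graph evaluation as a product of a \emph{uniformly bounded} number of moves drawn from a fixed library, with a single move sequence that works for \emph{all} colorings simultaneously (otherwise the reduction is not an instance of $\#\CSP(\F_\B)$ at all). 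The paper's solution is a dedicated lemma: a greedy polynomial-time algorithm that reduces any closed trivalent graph in $S^2$ to the empty diagram using only circle removals, tadpole trims, bubble pops, $F$-moves, vertex spirals, and orientation reversals, so that each move is accounted for by one constraint function (with fresh variables for every edge and vertex appearing at every intermediate stage). The choice of constraint family $\F_\B$ is therefore not ``the $R$-matrices and cup/cap morphisms'' but rather the coefficient tensors of these simplification moves. Without this uniformization step, your reduction does not produce a bona fide $\#\CSP(\F_\B)$ instance, so the second sentence of part (b) is not established by your argument.
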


Two routine points of clarification are due.

First, we note that all fusion and modular categories over $\CC$ admit finite skeletal presentations using algebraic numbers over $\QQ$.
This is because the defining equations for the skeletal data are all algebraic over $\QQ$.
In particular, since we are interested in how the complexity of $|M|_\C$ or $\tau_\B(M)$ depends on variable $M$ for \emph{fixed} $\C$ or $\B$, there is no harm in assuming that $\C$ and $\B$ are encoded in this way.

Second, as usual in computational 3-manifold topology, to say that a problem whose input is a triangulated 3-manifold is solvable in polynomial time means that there exists an algorithm to solve the problem that runs in time polynomial in the size of the triangulation.
For a 3-manifold presented via integral surgery on a link diagram in $S^3$, the algorithm must run in time jointly polynomial in the crossing number of the link diagram, the number of components of the link, and the absolute values of the surgery coefficients.\footnote{In particular, we might understand the surgery coefficients as being expressed in unary, not binary.  (If we used the latter, then it would not be clear how to build a triangulation from a surgery diagram in polynomial time.)}

We refer the reader to \cite{me-TY} for further elaboration of both of these matters.

We now explain briefly the meaning and importance of dichotomy theorems within complexity theory.
Of course, it is an infamous open problem to show that $\p \ne \NP$ (the two complexity classes might be equal, but most experts do not expect this to be the case).
To establish this inequality it is necessary and sufficient to show that there exists an $\NP$-complete problem with no polynomial-time algorithm.  
Intriguingly, Ladner showed that if $\p \ne \NP$, then there exist problems in $\NP$ that are neither in $\p$ nor $\NP$-complete \cite{Ladner}.
These are usally referred to as ``$\NP$-intermediate."
In other words, an $\NP$-intermediate problem is a problem in $\NP$ that is neither in $\p$ nor $\NP$-hard.
Intuitively, Ladner's theorem shows that if one considers a family of decision problems, then it need not be the case that every problem in the family is either ``easy" (that is, in $\p$) or ``hard" (that is, $\NP$-hard)--there could be problems that have intermediate complexity.
When a given family of problems has the property that none of the problems has intermediate complexity, then one says that the family satisfies a ``dichotomy theorem."
The archetypical dichotomy theorem was established by Schaefer, who showed that ``local" Boolean satisfiability problems (parametrized by a set of allowed ``local constraints") satisfy a dichotomy theorem \cite{Schaefer}.

In the case of our Theorem \ref{thm:main}, we interpret (2+1)-d TQFT invariants as generalized (\emph{i.e.}\ $\CC$-valued instead of $\mathbb{N}$-valued) counting problems parametrized by spherical fusion categories $\C$ and modular fusion categories $\B$ (the categories are analogs of the allowed local constraints in Schaefer's dichotomy).
Our results establish the dichotomy that either a function of the type $M \mapsto |M|_\C \in \CC$ or $M \mapsto \tau_\B(M) \in \CC$ is ``easy" to compute (polynomial time) or else it is ``hard" to contract certain tensors built from the category $\C$ or $\B$ ($\shP$-hard).
In this way, there are no (2+1)-d TQFTs whose tensors are of ``intermediate" complexity.
In fact, we conjecture that the same can be said directly of the TQFT's \emph{invariants of 3-manifolds per se}.
Let us expound on these points now.

\subsection{Mapping the dichotomy}
\label{ss:dichotomy}
Whether or not $M \mapsto |M|_\C$ or $M \mapsto \tau_\B(M)$ is computable in polynomial time depends on $\C$ and $\B$. 
Having established Theorem \ref{thm:main}---which only asserts the \emph{existence} of a dichotomy---it is natural to wonder where one should draw the line between easy and hard.  Better yet, ideally, one would like to be able to prove that the dichotomy of Theorem \ref{thm:main} is \emph{effective}, meaning, given $\C$ or $\B$, there exists a polynomial-time algorithm to decide precisely when the category falls into the easy case (here ``polynomial-time" means in the size of the skeletalization of $\C$ or $\B$).
Our proof of Theorem \ref{thm:main} relies on the main result of Cai and Chen's work \cite{CaiChen-dichotomy}, which establishes a dichotomy theorem for a generalized type of ``solution counting" to constraint satisfaction problems $\#\CSP(\F)$ with a fixed ``$\CC$-weighted constraint family" $\F$.
We carefully define $\#\CSP(\F)$ in Subsection \ref{ss:CaiChen}, but here we note that not only do Cai and Chen prove that for every choice of constraint family $\F$, $\#\CSP(\F)$ is either $\shP$-hard or computable in polynomial time---they also provide three necessary and sufficient conditions that \emph{characterize precisely} which $\F$ allow for polynomial time solutions to $\#\CSP(\F)$.
These conditions are called ``block orthogonality," ``Mal'tsev" and ``Type Partition".
Our proof of Theorem \ref{thm:main} consists in converting a spherical fusion category $\C$ or modular fusion category $\B$ into an appropriate constraint family $\F_\C$ or $\F_\B$ such that computing $|M|_\C$ or $\tau_\B(M)$ is equivalent to computing an instance (depending on $M$) of a problem in $\#\CSP(\F_\C)$ or $\#\CSP(\F_\B)$, respectively.
In particular, for the constraint families $\F_\C$ and $\F_\B$ we shall build, it should be possible to interpret the three conditions of Cai and Chen directly in terms of the categories $\C$ and $\B$.
It is beyond the scope of the present work to attempt to accomplish this.
However, we believe this is an important problem, since it should shed light on variations of the Property F conjecture related to anyon classification, as we explain in the next subsection.

Let us now address the more important deficiency of Theorem \ref{thm:main}, alluded to at the end of the previous subsection: it would be better to get an outright dichotomy for 3-manifold invariants, and not just general tensors derived from a fusion category.  To this end, Theorem \ref{thm:main} can be understood as a first step towards proving the following more desirable result.

\begin{con}\label{con:main}
\begin{enumerate}[(a)]
\item Fix a spherical fusion category $\C$ over $\CC$, presented skeletally with all data given as algebraic numbers over $\QQ$.
If $M$ is a closed, oriented, triangulated $3$-manifold (treated as computational input), then computing the Turaev-Viro-Barrett-Westbury invariant $|M|_{\C} \in \CC$ is either solvable in (classical) polynomial time or is $\shP$-hard.
\item Fix a modular fusion category $\B$ over $\CC$, presented skeletally with all data given as algebraic numbers over $\QQ$.
If $M$ is a closed, oriented 3-manifold encoded via a surgery diagram (treated as computational input), then the problem of computing the Reshetikhin-Turaev invariant $\tau_{\B}(M) \in \CC$ is either solvable in (classical) polynomial time or is $\shP$-hard. 
\end{enumerate}
\end{con}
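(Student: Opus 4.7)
The plan is to strengthen Theorem \ref{thm:main} by establishing, for each fixed spherical fusion category $\C$, a polynomial-time reduction from $\#\CSP(\F_\C)$ to the 3-manifold invariant computation problem $M \mapsto |M|_\C$, and an analogous reduction from $\#\CSP(\F_\B)$ to $M \mapsto \tau_\B(M)$. Granted such reductions, Theorem \ref{thm:main} immediately upgrades: whenever $\#\CSP(\F_\C)$ is $\shP$-hard, so is computing $|M|_\C$; and whenever computing $|M|_\C$ is in polynomial time, so is $\#\CSP(\F_\C)$. Thus the Cai--Chen dichotomy transfers directly to 3-manifold invariants.

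To construct the reduction, I would exploit the fact that a closed triangulated 3-manifold $M$ is already a very specific $\#\CSP(\F_\C)$-instance: each tetrahedron contributes a 6j-symbol tensor indexed by the six labels on its edges, each edge of $M$ is a summed index, and each vertex or face contributes a local weighting factor from the skeletal data of $\C$. The core task is to show that the restricted shape of these ``3-manifold instances'' is computationally as expressive as arbitrary $\#\CSP(\F_\C)$-instances. I would accomplish this by producing a library of \emph{topological gadgets}: triangulated 3-manifolds-with-boundary whose state sums realize each elementary constraint in $\F_\C$ (the quantum trace, the pairing, F-matrices, fusion projections, and scalar prefactors). Arbitrary tensor contractions from $\F_\C$ would then be realized as gluings of these gadgets along boundary triangles, with extra filler tetrahedra inserted as needed to cap off remaining boundary and convert the result into a bona fide closed triangulated 3-manifold. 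A correctly designed scheme should yield, for each $\#\CSP(\F_\C)$-instance $I$, a triangulation $M_I$ satisfying $|M_I|_\C = \lambda_I \cdot \mathrm{val}(I)$ for a nonzero scalar $\lambda_I$ that can be precomputed in polynomial time. The same strategy should go through for the RT case, using colored framed tangles in the 3-ball as elementary gadgets and closing them up into surgery link diagrams in $S^3$.

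The main obstacle I foresee is the topological rigidity of closed 3-manifold triangulations versus the flexibility of arbitrary $\#\CSP(\F_\C)$-instances: around each edge of a triangulation the incident tetrahedra are cyclically ordered by the link of that edge, each face is shared by exactly two tetrahedra, and global Euler-characteristic constraints must be respected. Arbitrary tensor networks in the Cai--Chen sense need not satisfy any of these conditions. Overcoming this will require that the topological filler inserted during gluing contribute only a controlled scalar rather than a genuinely new tensor, and that its size be polynomial in the input; the existence of such filler gadgets likely hinges on careful use of Pachner moves and stellar subdivisions, together with the standard ``bubble'' and ``pillow'' moves for spherical fusion TQFTs. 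A secondary obstacle arises in the RT case: surgery presentations require integer framings in unary and linking patterns for genuine links in $S^3$, a substantially more rigid class than arbitrary colored ribbon graphs, so extra tricks such as Kirby moves and stabilization by connect-summing copies of $S^2 \times S^1$ will probably be needed. Both obstacles seem tractable in principle, but the truly hard part will be producing a uniform construction that works for every $\C$ and $\B$---in particular, one that handles degenerate or sparse cases where naive gadgets produce identically zero tensors and $\lambda_I$ would otherwise vanish.
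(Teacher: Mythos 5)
The paper does not prove Conjecture \ref{con:main}; the statement is explicitly left as an open problem, and in Subsection \ref{ss:improvements} the authors state that proving it ``will require substantially new ideas.'' So there is no paper proof to compare against, only the authors' suggested avenues, and your sketch goes a different way from those. You propose a polynomial-time reduction from $\#\CSP(\F_\C)$ to the problem $M \mapsto |M|_\C$ (and likewise from $\#\CSP(\F_\B)$ to $M \mapsto \tau_\B(M)$), which is the \emph{reverse} direction from what Theorem \ref{thm:main} actually establishes. The paper instead proposes either tailoring the constraint family so that every $\#\CSP$-instance not of the form $I_M$ can be recognized and evaluated in polynomial time, or passing to a holant reformulation; both are genuinely different strategies from your gadget-based ``inverse'' reduction.

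The core gap is that the existence of your topological gadgets and filler triangulations with controlled, nonvanishing, polynomial-time-computable scalar contributions is asserted but nowhere constructed. That is precisely where the technical work lies, and the authors regard the required construction as likely needing new ideas rather than a routine deployment of Pachner, pillow, and bubble moves. There is also a conceptual worry that you underplay: the paper explicitly cautions (Subsection \ref{ss:improvements}) that it would be consistent with Theorem \ref{thm:main} for there to exist a spherical fusion category $\C$ with $|M|_\C$ computable in polynomial time while $\#\CSP(\F_\C)$ remains $\shP$-hard. In that scenario, which the authors deem unlikely but do not rule out, the reduction you propose could not exist unless $\shP$-hard problems were polynomial-time solvable. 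To make your strategy unconditional you would either need to exclude that scenario for every $\C$, or first exhibit a differently tailored constraint family that is provably polynomial-time \emph{equivalent} to the TVBW invariant and only then invoke Cai--Chen. The same issues recur, compounded by the rigidity of surgery presentations, in the Reshetikhin--Turaev case. As written, the proposal is a plausible research direction rather than a proof, and it is not the direction the authors themselves advocate.
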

See Subsection \ref{ss:improvements} for some discussion of how we expect one might get started on proving this conjecture---in particular, the relevance of holant problems \cite{CaiETAL-holantdef}.

\subsection{Implications for ``anyon classification"}
\label{ss:implications}
Theorem \ref{thm:main} and Conjecture \ref{thm:main} assert that for certain precise formulations of the problem of ``anyon classification," whatever the ``type" is for a given modular fusion category $\B$, it can only be one of two things, with no ``intermediate" cases.
In order to explain this more carefully, we pause to note the many ways one can make the problem of anyon classification precise, and situate our result exactly in this milieu.
Moving from the more ``purely mathematical" to the more ``applied" end of the spectrum, ``anyon classification" could mean any of the following precise problems:
\begin{enumerate}
\item Algebraic classification of unitary modular fusion categories (MFCs) up to ribbon tensor equivalence.
	\begin{itemize}
	\item Much of the literature on fusion categories can be considered as contributing to this problem.
	\item Presumably one would be satisfied with a solution to this problem ``modulo finite group theory."
	\end{itemize}
\item Algebraic classification of simple objects $X$ in unitary MFCs $\B$ according to whether or not the braid group representations $B_n \to U(\End_\B(X^{\otimes n}))$ have finite image, dense image, or something else.
	\begin{itemize}
	\item The Property F conjecture is of course directly related to this matter.
	\item One can generalize this question to consider mapping class group representations of higher genus surfaces with different types of anyons on them.
	\end{itemize}
\item Complexity-theoretic classification of MFCs according to how easy or hard it is to \emph{exactly compute} their Reshetikhin-Turaev 3-manifold invariants (as algebraic numbers over $\QQ$).
	\begin{itemize}
		\item {\bf Our Theorem \ref{thm:main} is situated here--almost!}  We have established a dichotomy of the form either ``3-manifold invariants easy" or ``tensors in the category are hard to contract".  Our results represent a non-trivial step towards the desired dichotomy of Conjecture \ref{con:main}: ``invariants easy" or ``invariants hard".
	\item One might ask this question for \emph{restricted classes of 3-manifolds} (such as ``knots in $S^3$" or ``links in $S^3$" or ``integer homology 3-spheres"), and the classification might change \cite{me-TY}.
	\end{itemize}
\item Complexity-theoretic classification of MFCs according to how easy or hard it is to ``\emph{approximate}" their Reshetikhin-Turaev 3-manifold invariants.
	\begin{itemize}
	\item There are different types of approximations one might ask for.  \emph{A priori}, each type should be understood as giving a different version of this question.
	\item ``Exactly compute" is one way to ``approximate."
	\item Pioneering works of Freedman, Kitaev, Larsen and Wang show that for certain approximation schemes, there exists unitary MFCs $\B$ for which the ability to approximate their 3-manifold invariants is equivalent in power to $\BQP$ (bounded-error quantum polynomial time) \cite{FreedmanKitaevWang,FreedmanLarsenWang-universal}.  In particular, their work established the original paradigm for topological quantum computation via anyon braiding.
	\item Kuperberg showed that results for one type of approximation can have important implications for other types of approximations \cite{Kuperberg-hard}.  In particular, the kinds of approximations that a quantum computer can efficiently make for Reshetikhin-Turaev invariants are (in general/worst case) not precise enough to do anything useful for distinguishing 3-manifolds even if their invariants are promised to be unequal by a large amount.
	\end{itemize}
\item Complexity-theoretic classification of MFCs according to whether or not they support universal quantum computation with braiding \emph{and adaptive anyonic charge measurements}. 
	\begin{itemize}
	\item Quantum computation using braidings and charge measurements of anyons in a fixed unitary MFC is often called ``topological quantum computing with adaptive charge measurement."
	\item Our entirely subjective opinion is that this is the most important flavor of anyon classification, at least when considered from the perspective of the goal of actually building a universal, fault-tolerant quantum computer.
	\item Even unitary MFCs whose anyons all have Property F (and, hence, are not universal via braiding alone) can be universal when braiding is supplemented with charge measurements, see e.g.\ \cite{CuiWang-metaplectic}.
	\item While adaptive charge measurement is generally considered fault-tolerant for topological reasons, unlike the case of braiding-only topological quantum computing, the amplitudes with which one performs a quantum computation in this paradigm are not (normalizations of) Reshetikhin-Turaev invariants of 3-manifolds. 
	\end{itemize}
\end{enumerate}
There are known relations between these different classification problems.  For example, on one hand, if $X$ is an anyon such that $B_n \to U(\End_\B(X^{\otimes n}))$ is dense, then the Solovay-Kitaev theorem implies that $\B$ supports universal topological quantum computation via braiding (without needing adaptive charge measurement).  On the other hand, if $X$ has Property F, then it is known that braiding with $X$ is never powerful enough to encode all of $\BQP$ in its braidings.  This latter point was the main motivation for the Property F conjecture in the first place, since one would like to rule out the ``obviously" un-useful anyons easily. 

To understand the potential usefulness of Theorem \ref{thm:main} or Conjecture \ref{con:main}, it is perhaps helpful to pull on the thread of these motivations for the Property F conjecture a bit more so that we can compare and contrast.

On one hand, there is no ``unconditional" implication known between classification problems (2) and (4) above in either direction, except if we condition on properties in a way we have already mentioned, namely: if an anyon has Property F, then it is definitely not braiding universal, while if an anyon has dense braidings, then it is universal.
This is not ``unconditional" in the sense that as far as problem (2) is concerned, there is a third case that remains to be addressed: anyons with braidings that are neither dense nor have Property F.
Do they even exist?
If so, what are we to make of them?
Are they universal or not?
Maybe sometimes they are and sometimes they are not?
Conversely, if an anyon is braiding universal, does it necessarily have dense braid group representations?
These are interesting questions worth pursuing, but it could require quite a bit of effort to resolve each of them.

On the other hand, there is an ``unconditional" connection between (3) and (4) in at least one direction: (3) is simply the special case of (4) where the type of ``approximation" is chosen to be ``exact computation." 
So classification problem (3) might be understood as a warm-up to the version of problem (4) where the type of approximation is not ``exact", but is instead the kind of approximation relevant to topological quantum computing.  (For the sake of space, we refrain from precisely defining this type of approximation here; see the intro discussions of \cite{Kuperberg-hard} or \cite{me-hyperbolic}.)
The key technical issue that this perspective highlights is the following: even categories whose anyons all have property F (and thus are not braiding universal) can have $\shP$-hard invariants \cite{KirbyMelvin,me-zombies,me-coloring}.
Hence, more work needs to be done to properly understand the relationship between the $\BQP$-universality of anyon braidings in a given modular fusion category $\B$ and $\shP$-hardness of (exactly) computing $\tau_\B(M)$ on 3-manifolds $M$.
At the end of the day this is not so different from the situation between (2) and (4).

However, we conclude this discussion by noting that it is conceivable there exists a very tight connection between anyon classification problems (3) and (5) (while there is essentially no way to relate (2) and (5)).
Indeed, one might reasonably guess that $\shP$-hardness for the exact calculation of invariants implies that topological quantum computing with adaptive charge measurements is always sufficient to generate $\BQP$-universal topological gates.
This guess would be consistent with all known examples.

We plan to explore these matters in future work.

\subsection{Outline}
\label{ss:outline}
The next Section \ref{sec:proof} constitutes the remainder of this paper, and contains the proof of Theorem \ref{thm:main}.
Subsection \ref{ss:CaiChen} briefly reviews the definiton of $\#\CSP(\F)$, as well as Cai and Chen's dichotomy theorem for these problems.
Subsection \ref{ss:TVBW} contains the proof of part (a) of Theorem \ref{thm:main}, while Subsection \ref{ss:RT} contains the rather more-involved proof of part (b).

\section{Proof of Theorem \ref{thm:main}}
\label{sec:proof}

\subsection{Cai and Chen's dichotomy for weighted CSPs}
\label{ss:CaiChen}
Before proving either part of our main theorem, we review the definition of $\#\CSP(\F)$, following \cite{CaiChen-dichotomy}:
\begin{itemize}
\item We fix a finite set $D=\{1,\dots,d\}$ called the \emph{domain} (which, by an abuse of notation, we will suppress from the notation $\#\CSP(\F)$).
\item We fix a \emph{($\CC$-valued) weighted constraint family} $\F = \{f_1,\dots,f_h\}$, where each $f_i$ is a $\CC$-valued function $f_i: D^{r_i} \to \CC$ for some $r_i \ge 1$ called the \emph{arity} of $f_i$.  We assume all the values that the $f_i$ assume are encoded as algebraic numbers over $\QQ$.
\item An \emph{instance} $I$ of $\#\CSP(\F)$ consists of a tuple $\x = (x_1,\dots,x_n)$ of variables over $D$ (which will be suppressed in our notation) and a set $I$ of tuples $(f,i_1,\dots,i_r)$ in which $f$ is an $r$-ary function from $\F$ and $i_1,\dots,i_r \in \{1,\dots,n\}$ are indices of the variables in $\x$.
\item The \emph{output} of $\#\CSP(\F)$ on instance $I$ is the algebraic number $Z(I) \in \CC$ given by
\[Z(I) \defeq \sum_{x \in D^n} F_I(\x),\]
where
\[ F_I(\x) \defeq \prod_{(f,i_1,\dots,i_r) \in R} f(x_{i_1},\dots,x_{i_r}).\]
\end{itemize}

The main result of \cite{CaiChen-dichotomy} is

\begin{thm}[Thm.~1, \cite{CaiChen-dichotomy}]
Given any constraint set $\F$ as above, $\#\CSP(\F)$ is either computable in polynomial time or $\shP$-hard.
\label{thm:dichotomy}
\end{thm}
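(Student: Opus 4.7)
The standard template for a dichotomy of this form is to identify a finite list of algebraic conditions on $\F$ that are jointly sufficient for polynomial-time computability of $Z(I)$, and to show that the failure of any one of them suffices for $\shP$-hardness. The paper's introduction already flags the three conditions Cai and Chen use---\emph{Block Orthogonality}, \emph{Mal'tsev}, and \emph{Type Partition}---so I would adopt as a working ansatz that these are both necessary and sufficient, and structure the argument as ``tractable if and only if all three hold.''

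For the tractable direction, I would build on the chain of prior dichotomies: Bulatov's dichotomy for unweighted $\#\CSP$, the Bulatov--Dalmau algorithm for constraint languages with a Mal'tsev polymorphism, and the successive extensions first to non-negative rational weights and then to real weights. The Mal'tsev condition ensures that the support of $F_I$ has a compact ``frame'' representation which can be maintained under constraint addition in polynomial time. Block Orthogonality and Type Partition then let one organize contributions into classes where the weights factor through a small collection of block representatives; aggregating over the compactly-represented support using these per-block factors yields $Z(I)$ in polynomial time.

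For the hardness direction, the plan is to combine gadget constructions with polynomial interpolation from a canonical $\shP$-hard problem such as $\#3\mathrm{SAT}$ or the permanent of an integer matrix. When the Mal'tsev condition fails, known universal-algebra arguments let $\F$ simulate a sufficiently rich Boolean constraint family to encode this hard problem, with weights tracked through interpolation. When Block Orthogonality or Type Partition fails, the failure itself exposes a non-orthogonal or mis-partitioned weight pattern from which a hard matrix-permanent instance can be recovered by ``thickening'' gadgets---replacing a single constraint by a high tensor power of itself---and solving a Vandermonde-style system over the appropriate algebraic extension of $\QQ$ to read off the desired target value.

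The main obstacle is the hardness side for genuinely complex weights. Unlike the non-negative or real cases, one cannot lean on monotonicity, positivity, or sign patterns to preclude miraculous cancellations, and the interpolation has to be performed inside an algebraic number field while tracking precisely when cancellations can and cannot annihilate the information needed for the reduction. Coping with the unimodular ``gauge'' freedom inherent in complex-valued constraints---which leaves $Z(I)$ essentially unchanged but destroys naive invariants used in earlier proofs---is the conceptual heart of the argument, and is exactly where Block Orthogonality must be made to play the role that positivity played in previous dichotomies.
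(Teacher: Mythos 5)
This statement is not proved in the paper at all: it is quoted verbatim as Theorem~1 of Cai and Chen's JACM 2017 paper \cite{CaiChen-dichotomy} and used as a black box. There is therefore no ``paper's own proof'' to compare your sketch against; the only proof in existence is the roughly hundred-page argument in \cite{CaiChen-dichotomy} itself. Your plan does correctly name the three conditions (Block Orthogonality, Mal'tsev, Type Partition) and the broad contours of the Cai--Chen strategy---succinct/frame-style representations of the Mal'tsev-closed support together with block-wise aggregation on the tractable side, and gadget constructions plus polynomial interpolation from permanent-type problems on the hard side, with Block Orthogonality taking over the structural role that positivity plays in the real- and nonnegative-weighted dichotomies. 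That is a fair high-level description of what Cai and Chen actually do, and your identification of the genuinely new difficulty (controlling cancellation among complex weights modulo unimodular gauge freedom) is the right thing to flag. But as written it is a roadmap, not a proof: none of the interpolation lemmas, the gadget constructions, the algebraic-number-field bookkeeping, or the algorithm on the tractable side are actually carried out. Since the present paper deliberately treats this result as external input, the appropriate action here is to cite \cite{CaiChen-dichotomy} rather than reprove it; if you want to verify that your outline tracks the real argument, you must read that reference directly.
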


\subsection{Proof of Theorem \ref{thm:main}(a)}
\label{ss:TVBW}
Let $\C$ be a spherical fusion category over $\CC$.
To prove part (a) of Theorem \ref{thm:main}, it suffices---thanks to Theorem \ref{thm:dichotomy}---to build a domain $D_\C$ and weighted constraint set $\F_\C$ with the following property: there exists a polynomial time algorithm that converts a triangulated 3-manifold $M$ into an instance $I_M$ of $\#\CSP(\F_\C)$ such that
\[Z(I_M) = |M|_\C.\]
Readers already familiar with the state-sum formula for TVBW invariants will notice that the definition of $Z(I)$ is quite similar in spirit.
The goal of the present proof is simply to make this similarity precise. 

To this end, let us recall the state-sum formula for $|M|_\C$:
\[ |M|_\C = \D^{-2|V_M|}\sum_{\substack{ {L: E_M \to \Irr(\C)} \\ {F_L: F_M \to \N } \text{ consistent w/ $L$}}}  \frac{\prod_{e\in E_M} \dim(L(e))^2 \prod_{t\in T_M}|t^L|}{\prod_{f\in F_M} |f^L|}\]
where our notation is as follows:
\begin{itemize}
\item $V_M$ is the ordered list of vertices in the triangulation $M$ and $\D$ is the total quantum dimension of $\C$.
\item $E_M$ is the set of edges in the triangulation $M$ and $\Irr(\C)$ is set of simple objects in the given skeletalization of $\C$. 
\item $F_M$ is the set of faces in the triangulation $M$ and $\N$ is the set of labels of the trivalent Hom spaces \[ \mathrm{Hom}(k,i\otimes j) = \mathrm{span}\Bigg\{\scalebox{1}{
\begin{tikzpicture}[baseline = 0cm]
	\draw[ultra thick, postaction = {decorate, decoration= {markings, mark=at position .75 with {\arrow{<}}}}] (0,0) -- (-0.5,0.5)node[left]{$i$};
	\draw[ultra thick, postaction = {decorate, decoration= {markings, mark=at position .75 with {\arrow{<}}}}] (0,0) -- (0.5,0.5)node[right]{$j$};
	\draw[ultra thick, postaction = {decorate, decoration= {markings, mark=at position .8 with {\arrow{>}}}}] (0,0) -- (0,-0.5)node[left]{$k$};
	\node[draw=black, rectangle, fill=white, inner sep=2.5pt, minimum width = 20pt] at (0,0) {$\alpha$};
\end{tikzpicture}}\Bigg\}_{\alpha = 1, \dots , N_{ij}^k}\] and $|f^L|$ is the 3j+1k-symbol obtained by evaluating the face $f$ with a given labeling $L$ of the edges and faces of $M$ (See Figure \ref{fig:3j+1k}).
\item $T_M$ is the set of tetrahedra in the triangulation $M$, and $|t^L|$ is the 6j+4k-symbol obtained by evaluating the tetrahedron $t$ with a given labeling $L$ of the edges and faces of $M$, where we take into account whether the orientation of $t$ given by the orientation of $M$ matches the induced orientation given by the ordering of the vertices. This will be made more precise below.
\end{itemize}

\begin{figure}
	\begin{tikzpicture}
		\node at (-2.5,0.5) {$\phi(j_1,j_2,j_3;k)\quad =$};
		\draw[ultra thick, postaction = {decorate, decoration= {markings, mark=at position .5 with {\arrow{<}}}}] (0.25,0) -- (0.25,1.5);
		\draw[ultra thick, postaction = {decorate, decoration= {markings, mark=at position .5 with {\arrow{<}}}}] (-0.25,0) -- (-0.25,1.5);
		\draw[ultra thick] (0,1.5) arc (180:0:0.75);
		\draw[ultra thick, postaction = {decorate, decoration= {markings, mark=at position .5 with {\arrow{<}}}}] (1.5,1.5) -- (1.5,0);
		\draw[ultra thick] (1.5,0) arc (0:-180:0.75);
		\node[draw=black, fill=white, rectangle, minimum width = 1cm] at (0,0) {$k$};
		\node[draw=black, fill=white, rectangle, minimum width = 1cm] at (0,1.5) {$k$};
		\node[left] at (-0.25,0.75) {$j_1$};
		\node[right] at (0.25,0.75) {$j_2$};
		\node[right] at (1.5,0.75) {$j$};
	\end{tikzpicture}
\caption{A 3j+1k-symbol}
\label{fig:3j+1k}
\end{figure}
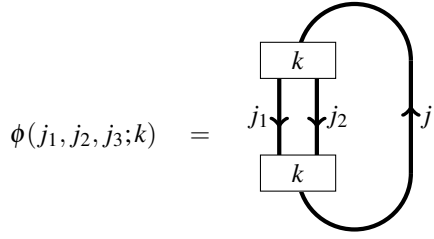


Since we assume $\C$ is not necessarily multiplicity-free, then instead of 6j-symbols, we will be using so-called 6j+4k-symbols $\begin{bmatrix} j_1 & j_2 & j_3 & k_{1,2} & k_{2,3} \\ j & j_{12} & j_{23} & k_{12,3} & k_{1,23} \end{bmatrix}^{\pm}$, which are defined by the contraction of a specific colored graph.

\begin{center}
\begin{tikzpicture}
	\node at (-5,0) (symbol) {$\begin{bmatrix} j_1 & j_2 & j_{12} & k_{1,2} & k_{2,3} \\ j_3 & j & j_{23} & k_{12,3} & k_{1,23} \end{bmatrix}^+ \quad \defeq$ };
        \draw[ultra thick, postaction={decorate, decoration={markings, mark=at position .5 with {\arrow{>}}}}, decoration={markings, mark=at position .5 with {\node[left] {$j_1$};}}] (-0.25,2) .. controls +(0,-1) and +(0,1) .. (-1.25,-0.75);
        \draw[ultra thick, postaction={decorate, decoration={markings, mark=at position .5 with {\arrow{>}}}}, decoration={markings, mark=at position .5 with {\node[above right] {$j_{23}$};}}] (0.25,2) .. controls +(0,-1) and +(0,1) .. (1,0.75);
        \draw[ultra thick, postaction={decorate, decoration={markings, mark=at position .5 with {\arrow{>}}}},decoration={markings, mark=at position .5 with {\node[below] {$j_2$};}}] (0.75,0.75) .. controls +(0,-1) and +(0,1) .. (-0.75,-0.75);
        \draw[ultra thick, postaction={decorate, decoration={markings, mark=at position .5 with {\arrow{>}}}}, decoration={markings, mark=at position .5 with {\node[right] {$j_3$};}}] (1.25,0.75) .. controls +(0,-1) and +(0,1) .. (0.25,-2);
        \draw[ultra thick, postaction={decorate, decoration={markings, mark=at position .5 with {\arrow{>}}}}, decoration={markings, mark=at position .5 with {\node[below left] {$j_{12}$};}}] (-1,-0.75) .. controls +(0,-1) and +(0,1) .. (-0.25,-2);
        \draw[ultra thick, postaction={decorate, decoration={markings, mark=at position .5 with {\arrow{<}}}}] (0,2) arc(180:0:1);
        \draw[ultra thick, postaction={decorate, decoration={markings, mark=at position .5 with {\arrow{<}}}}, decoration={markings, mark=at position .5 with {\node[right] {$j$};}}] (2,2) -- (2,-2);
        \draw[ultra thick, postaction={decorate, decoration={markings, mark=at position .5 with {\arrow{<}}}}] (2,-2) arc(0:-180:1);
        \node[draw=black, fill=white, rectangle, minimum width = 1cm] at (0,2) {$k_{1,23}$};
        \node[draw=black, fill=white, rectangle, minimum width = 1cm] at (0,-2) {$k_{12,3}$};
        \node[draw=black, fill=white, rectangle, minimum width = 1cm] at (1,0.75) {$k_{2,3}$};
        \node[draw=black, fill=white, rectangle, minimum width = 1cm] at (-1,-0.75) {$k_{1,2}$};
    \end{tikzpicture}
\end{center}
This defines the ``positive" 6j+4k-symbols. We also define a ``negative" version of the 6j+4k-symbols. We will call them negative 6j+4k-symbols since they correspond to negatively-oriented tetrahedra with respect to the standard orientation on $\mathbb{R}^3$:
\begin{center}
\begin{tikzpicture}
	\node at (-5,0) (symbol) {$\begin{bmatrix} j_1 & j_2 & j_{12} & k_{1,2} & k_{2,3} \\ j_3 & j & j_{23} & k_{12,3} & k_{1,23} \end{bmatrix}^-\quad \defeq$ };
        \draw[ultra thick, postaction={decorate, decoration={markings, mark=at position .5 with {\arrow{>}}}}, decoration={markings, mark=at position .5 with {\node[left] {$j_1$};}}] (-1.25,0.75) .. controls +(0,-1) and +(0,1) .. (-0.25,-2);
        \draw[ultra thick, postaction={decorate, decoration={markings, mark=at position .5 with {\arrow{>}}}}, decoration={markings, mark=at position .5 with {\node[below right] {$j_{23}$};}}] (1,-0.75) .. controls +(0,-1) and +(0,1) .. (0.25,-2);
        \draw[ultra thick, postaction={decorate, decoration={markings, mark=at position .5 with {\arrow{>}}}},decoration={markings, mark=at position .5 with {\node[below] {$j_2$};}}] (-0.75,0.75) .. controls +(0,-1) and +(0,1) .. (0.75,-0.75);
        \draw[ultra thick, postaction={decorate, decoration={markings, mark=at position .5 with {\arrow{>}}}}, decoration={markings, mark=at position .5 with {\node[right] {$j_3$};}}] (.25,2) .. controls +(0,-1) and +(0,1) .. (1.25,-0.75);
        \draw[ultra thick, postaction={decorate, decoration={markings, mark=at position .5 with {\arrow{>}}}}, decoration={markings, mark=at position .5 with {\node[above left] {$j_{12}$};}}] (-0.25,2) .. controls +(0,-1) and +(0,1) .. (-1,0.75);
        \draw[ultra thick, postaction={decorate, decoration={markings, mark=at position .5 with {\arrow{<}}}}] (0,2) arc(180:0:1);
        \draw[ultra thick, postaction={decorate, decoration={markings, mark=at position .5 with {\arrow{<}}}}, decoration={markings, mark=at position .5 with {\node[right] {$j$};}}] (2,2) -- (2,-2);
        \draw[ultra thick, postaction={decorate, decoration={markings, mark=at position .5 with {\arrow{<}}}}] (2,-2) arc(0:-180:1);
        \node[draw=black, fill=white, rectangle, minimum width = 1cm] at (0,2) {$k_{12,3}$};
        \node[draw=black, fill=white, rectangle, minimum width = 1cm] at (0,-2) {$k_{1,23}$};
        \node[draw=black, fill=white, rectangle, minimum width = 1cm] at (-1,0.75) {$k_{1,2}$};
        \node[draw=black, fill=white, rectangle, minimum width = 1cm] at (1,-0.75) {$k_{2,3}$};
    \end{tikzpicture}
\end{center}
Here, $j_1,j_2,j_3,j,j_{12},j_{23}$ are simple objects, $k_{1,2}\in \{0,\dots,N_{j_1 j_2}^{j_{12}}\}$, $k_{2,3}\in \{0,\dots, N_{j_2 j_3}^{j_{23}}\}$, $k_{12,3}\in \{0,\dots, N_{j_{12} j_3}^j\}$, and $k_{1,23}\in \{0,\dots ,N_{j_1,j_{23}}^j\}$.

We now have enough to identify our domain and weighted constraint set. Define \[D_\C \defeq \Irr(\C) \sqcup \N \sqcup \{*\}.\]
Now extend the 6j+4k symbols to be 10-ary functions on our domain $D_\C$ in the ``trivial" way:
\[\Delta^+(x_1, x_2, \dots ,x_{10}) \defeq \begin{dcases}
\begin{bmatrix} x_1 & x_2 & x_5 & x_7 & x_8 \\ x_3 & x_4 & x_6 & x_9 & x_{10} \end{bmatrix}^+ & \text{if } x_1 ,\dots ,x_6\in \Irr(\C)\text{ and }x_7,\dots ,x_{10} \in \N,\\
0 & \text{otherwise,}
\end{dcases} \] and \[ \Delta^-(x_1, x_2, \dots ,x_{10}) \defeq \begin{dcases}
\begin{bmatrix} x_1 & x_2 & x_5 & x_7 & x_8 \\ x_3 & x_4 & x_6 & x_9 & x_{10} \end{bmatrix}^- & \text{if } x_1 ,\dots ,x_6\in \Irr(\C)\text{ and }x_7,\dots ,x_{10} \in \N,\\
0 & \text{otherwise.}
\end{dcases} \] 
We similarly define 4-ary functions on our domain using the 3j+1k-symbols $\phi$ by taking 
\[\Phi^{-1}(x_1,x_2,x_3,x_4) \defeq \begin{cases}\phi(x_1,x_2,x_3;x_4)^{-1} & \text{if } x_1,x_2,x_3\in \Irr(\C) \text{ and } x_4\in \N,\\
0 & \text{otherwise.}\end{cases}\]
And we define 1-ary functions using the quantum dimensions of simple objects:
 \[ \mathrm{d}^2(x) \defeq \begin{cases}\dim(x)^2 & \text{if } x\in \Irr(\C), \\
 0 & \text{otherwise.}\end{cases}\]
Finally, we define a 1-ary function to encode the total quantum dimension of $\C$: \[\D^{-2} \defeq \D^{-2}(x) \defeq \begin{cases} \left (\sum_{j\in \Irr(\C)} \dim(j)^2 \right )^{-1} & \text{if } x=*, \\ 0 & \text{otherwise}.\end{cases}\]
Using these functions, we define our weighted constraint family \[\F_\C \defeq \{\Delta^\pm, \Phi^{-1},  \mathrm{d}^2, \D^{-2}\}.\]

Our next goal is to describe how to convert a triangulation $M$ of an oriented manifold into an instance $I_M$ of $\#\CSP(\F_\C)$. 

The data of $M$ is comprised of:
\begin{itemize}
\item An ordered list of vertices $\{v_1, \dots, v_a\}$.
\item A list of oriented edges $\{e_1(v_{1_1}, v_{2_1}), \dots , e_b(v_{b_1}, v_{b_2})\}$ where $e_i(v_{i_1}, v_{i_2})$ means that $e_i$ is an edge connecting $v_{i_1}$ to $v_{i_2}$. (Note that these orientations are chosen arbitrarily.)
\item A list of oriented faces $\{ f_1(e_{1_1}, e_{2_1}, e_{3_1}), \dots f_c(e_{c_1}, e_{c_2}, e_{c_3})\}$ where $f_i(e_{i_1},e_{i_2},e_{i_3})$ means that $f_i$ is a face whose boundary consists of the edges $e_{i_1}$, $e_{i_2}$, and $e_{i_3}$. (Note that the orientations of the faces need to be consistent with the edge orientations in any way.)
\item A list of tetrahedra $\{t_1, \dots,t_d\}$ where $t_i = t_i(f_{i_1}, \dots, f_{i_4})$ means that $t_i$ is a tetrahedron with faces given by $f_{i_1}$, \dots, $f_{i_4}$.
\item To encode the orientation of $M$, each tetrahedron $t_i$ is endowed with a sign $+$ or $-$ to indicate the local orientation inside that tetrahedron.\footnote{These signs must assemble to give a $\{\pm\}$-valued 0-cocycle on the dual cellulation.  This condition could be easily checked, but for our purposes it is simply part of the data structure of $M$, and so this condition can be assumed to be met as a promise.  This condition is not necessary for our proof (although it is necessary for the proof that $|M|_\C$ is an invariant of $M$).}
\end{itemize}

For a given triangulation $M$ as described, define a tuple
\[\x_M \defeq (x_1, \dots, x_a, y_1, \dots, y_b, z_1,\dots,z_c)\}\]
that has a variable for each vertex, edge and face in $M$.
We now describe how to build the desired instance $I_M$ of $\#\CSP(\F_\C)$.
It will be clear from the construction that the mapping $M \mapsto I_M$ can be done in polynomial time in the size of $M$.

First we put the functions $\D^{-2}(x_1), \dots , \D^{-2}(x_a)$ and $\mathrm{d}^2(y_1), \dots, \mathrm{d}^2(y_b)$ in $I_M$ for every vertex and edge of $M$.
For each face $f_j(e_{j_1}, e_{j_2}, e_{j_3})$, we include $\Phi^{-1}(y_{j_1},y_{j_2},y_{j_3},z_j)$.
Finally, for each tetrahedron $t_i$, we include either
\[\Delta^{+}(y_{j_1}, \dots ,y_{j_6}, z_{i_1}, \dots ,z_{i_4})),\]
or
\[\Delta^{-}(y_{j_1}, \dots ,y_{j_6}, z_{i_1}, \dots ,z_{i_4})),\]
where $t_i$ has faces $f_{i_1}(e_{j_1},e_{j_2},e_{j_5})$, $f_{i_2}(e_{j_5},e_{j_3},e_{j_4})$, $f_{i_3}(e_{j_3},e_{j_2},e_{j_6})$, and $f_{i_4}(e_{j_6}, e_{j_1}, e_{j_4})$.  To determine whether we should include $\Delta^+$ or $\Delta^-$ for $t_i$, we check if the orientation of $t_i$ given by the orientation of $M$ matches the induced orientation by the ordering of the vertices; if they match, then we use $\Delta^+$, and otherwise we use $\Delta^-$.

This $I_M$ defines an instance of $\#\CSP(\F_\C)$ that computes the Turaev-Viro invariant for $M$.
Indeed, plugging in the definitions of our constraint functions, we get
\[Z(I_M) = \sum_{\x\in D^{a+b+c}} \prod_{v = 1}^a \D^{-2}(x_v) \prod_{e=1}^b \mathrm{d}^2(y_e) \prod_{F_i=(E_{i_1},E_{i_2},E_{i_3})} \Phi^{-1}(y_{i_1},y_{i_2},y_{i_3},z_i)\prod_{T_i = (F_{i_1},\dots, F_{i_4})} \Delta^{\eta_i}(y_{j_1}, \dots ,y_{j_6}, z_{i_1}, \dots ,z_{i_4})) \]
where $\eta_i = +$ if the orientation of $T_i$ given by the orientation of $M$ matches the induced orientation by the ordering of the vertices, and $\eta_i = -$ otherwise.
\emph{A priori}, $Z(I_M)$ includes a sum over more types of labelings than the state-sum formula for $|M|_\C$.
However, because of how we have chosen to define the functions in the constraint family $\F_\C$, all of these additional terms in the sum vanish.
To see this, first note that when $x_1, \dots ,x_v \neq *$, the entire term is 0.  In particular, this means all non-zero terms have a common factor of the global quantum dimension to the $-a$ power, and hence we can pull it out as the normalizing factor.
Similarly, when the edges are not labeled by elements of the domain $D_\C$ that are not simple objects of $\C$, or the faces are not labeled with multiplicities, the terms are zero.
We have furthermore arranged so that when the labeling of the edges and faces is not admissible, the 6j+4k-symbol for that term vanishes.
Therefore, the only surviving terms in the sum $Z(I_M)$ are the $\x$ which define admissible labelings of the edges and faces of $M$.
It is then straightforward to see $Z(I_M) = |M|_\C$ recovers the Turaev-Viro invariant.
\qed

\subsection{Proof of Theorem \ref{thm:main}(b)}
\label{ss:RT}
Before proving part (b) of Theorem \ref{thm:main}, we establish a technical result about the graphical calculus in a spherical fusion category.
The result is likely well-known to experts, but does not appear in the literature anywhere that we are aware. We begin by reviewing what we need of closed trivalent graphs in $S^2$. 

For our purposes, a \emph{(closed) trivalent graph} $\Gamma$ in $\RR^2$ (or $S^2 = \RR^2\cup \{\infty\}$) has:
\begin{itemize}
\item A finite collection $V$ of vertices $v_1,v_2,\dots,v_n$, where $v_i\in \RR\times \{i\}$ 
\item A collection $E$ of directed edges $e_1,e_2, \dots, e_k$ in $S^2$ 
\end{itemize}
subject to the conditions that
\begin{itemize}
\item Each edge $e_i$ is either a loop disjoint from $V$ or an arc connecting two (not necessarily distinct) vertices, with an interior that is disjoint from all vertices in $V$.
\item If $v$ is a vertex, then there is an open disk neighborhood $D(v)$ so that $D(v) \cap \Gamma$ has three arcs (coming from intersections of $D(v)$ with E, not necessarily distinct) emanating from $v$ with one arc parallel to the vector $\langle0,1\rangle$, one arc parallel to the vector $\langle 1,1\rangle$, and one arc parallel to the vector $\langle -1,1\rangle$. 
\end{itemize}
Such a graph $\Gamma$ is closed in the sense that there are no vertices that are involved in precisely one half-edge.

We will also need to consider closed trivalent graphs which have crossings. We will call these \emph{crossed (closed) trivalent graphs}. Crossed trivalent graphs are trivalent graphs where they are allowed to have finitely many double points in the interior of its edges. These double points must indicate which segment of the edge crosses ``over" or ``under" the other.

Recall that computing the Reshetikhin-Turaev invariant $\tau_\B(M)$ of a 3-manifold $M$ presented by a surgery diagram involves a process where we interpret a coloring of the components of the diagram by simple objects of $\B$ as an endomorphism of the tensor unit $\oone$ of $\B$.
Since $\oone$ is itself simple, such a coloring gives rise to an endomorphism $\oone \to \oone$, which in turn can be identified with a complex number because $\End(\oone) = \CC$.
The invariant $\tau_\B(M)$ is then (roughly) the sum of all of these numbers over all choices of colorings of the surgery diagram of $M$.
For any single coloring, the complex number associated to it can generally be understood as the result of a sequence of tensor contractions on a tensor induced by that coloring.
Moreover, this sequence of tensor contractions can be represented diagramatically, using a small number of standard diagrammatic operations that are determined from the data of the modular fusion category $\B$.
We call these operations Circle Removal, Tadpole Trim, Bubble Pop, $F$-Move, and Vertex Spiral (aka ``bending" moves); see Figures \ref{fig:type0}-\ref{fig:vertexspiral}.
We also allow ourselves to reverse edge orientations.
To compute the complex number associated to a colored surgery link, one must identify a sequence of these operations that simplifies the diagram to the empty diagram.
The desired number will then be a product of numbers determined from the operations in the sequence and the given coloring.

Our proof of Theorem \ref{thm:main}(b) essentially revolves around two key observations.

First, a kind of uniformity: given a surgery description of $M$, there exists a \emph{single sequence} of diagrammatic operations that can be used to evaluate \emph{all} colorings of the surgery link to complex numbers.
This uniformity is, more-or-less, what will make it possible to encode $M \mapsto \tau_\B(M)$ as an instance $I_M$ of $\#\CSP(\F)$ for an appropriately chosen $\F$.

Second: such a uniform sequence of operations can be identified in polynomial time from the surgery description of $M$.
This will imply that the reduction $M \mapsto I_M$ can be performed in polynomial time. 
We make this point more precise now.

\begin{figure}
	\begin{tikzpicture}
		\draw[ultra thick, postaction={decorate, decoration={markings, mark=at position .5 with {\arrow{<}}}}] (3.5,0) circle (0.5);
		\begin{scope}[shift = {(1,1)}]
			\draw[ultra thick, postaction={decorate, decoration = {markings, mark=at position 0.5 with {\arrow{>}}}}] (0,0) .. controls +(-1,1) and +(1,1) .. (0,0);
			\draw[ultra thick,postaction={decorate, decoration = {markings, mark=at position 0.5 with {\arrow{>}}}}] (0,0) -- (0,-1);
			\draw[ultra thick, postaction={decorate, decoration = {markings, mark=at position 0.5 with {\arrow{>}}}}] (-1,-2) -- (0,-1);
			\draw[ultra thick,postaction={decorate, decoration = {markings, mark=at position 0.65 with {\arrow{>}}}}] (0,-1) -- (0.5,-1.5);
			\draw[ultra thick, postaction={decorate, decoration = {markings, mark=at position 0.75 with {\arrow{>}}}}] (0.5,-1.5) .. controls +(1,1) and +(0.25,0.25) .. (1,-2.5);
			\draw[ultra thick, postaction={decorate, decoration = {markings, mark=at position 0.5 with {\arrow{<}}}}] (0.5,-1.5) .. controls +(0,-0.5) and +(-0.5,0.5) .. (1,-2.5);
			\draw[ultra thick, postaction={decorate, decoration = {markings, mark=at position 0.5 with {\arrow{<}}}}] (1,-2.5) -- (1,-3);
			\draw[ultra thick, postaction={decorate, decoration = {markings, mark=at position 0.5 with {\arrow{>}}}}] (-1,-2) -- (-1,-3);
			\draw[ultra thick, postaction={decorate, decoration = {markings, mark=at position 0.5 with {\arrow{<}}}}] (-1,-2) .. controls +(-0.25,0.25) and +(0,-0.5) .. (-1.5,-0.5);
			\draw[ultra thick, postaction={decorate, decoration = {markings, mark=at position 0.5 with {\arrow{>}}}}] (-1,-3) arc (-180:0:1);
			\draw[ultra thick, postaction={decorate, decoration = {markings, mark=at position 0.5 with {\arrow{>}}}}] (-1.5,-0.5) .. controls +(-1,1) and +(1,1) .. (-1.5,-0.5);
		\end{scope}
	\end{tikzpicture}
\caption{A trivalent graph in $\mathbb{R}^2$ which exhibits all three types of edges (it looks like a person waving!)}
\label{fig:trivalentgraphex}
\end{figure}
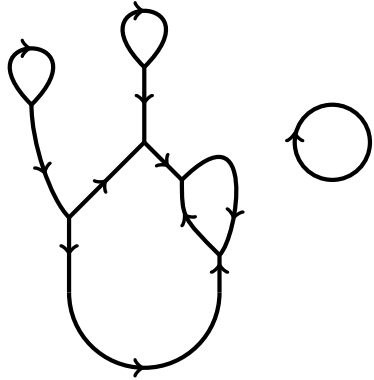

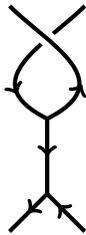
\begin{figure}
	\begin{tikzpicture}
		\draw[ultra thick, postaction = {decorate, decoration = {markings, mark = at position 0.75 with {\arrow{>}}}}] (0,2) .. controls +(-0.5,-0.5) and +(-1,0.5) .. (-0.5,0.5);
		\draw[white, line width = 6pt] (-1,2) .. controls +(0.5,-0.5) and +(1,0.5) .. (-0.45,0.45);
		\draw[ultra thick, postaction = {decorate, decoration = {markings, mark = at position 0.75 with {\arrow{<}}}}] (-1,2) .. controls +(0.5,-0.5) and +(1,0.5) .. (-0.5,0.5);
		\draw[ultra thick, postaction = {decorate, decoration = {markings, mark = at position 0.5 with {\arrow{>}}}}] (-0.5,0.5) -- (-0.5,-0.5);
		\draw[ultra thick, postaction = {decorate, decoration = {markings, mark = at position 0.5 with {\arrow{>}}}}] (-0.5,-0.5) -- (-1,-1);
		\draw[ultra thick, postaction = {decorate, decoration = {markings, mark = at position 0.5 with {\arrow{<}}}}] (-0.5,-0.5) -- (0,-1);
	\end{tikzpicture}
\caption{A portion of a crossed trivalent graph}
\label{fig:crossedtrivalentgraphex}
\end{figure}

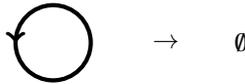
\begin{figure}
	\begin{tikzpicture}
		\draw[ultra thick,postaction={decorate, decoration = {markings, mark=at position 0.5 with {\arrow{>}}}}] (0,0) circle (0.5);
		\node at (1.5,0) {$\rightarrow$};
		\node at (2.5,0) {$\emptyset$};
	\end{tikzpicture}
\caption{Circle Removal}
\label{fig:type0}
\end{figure}

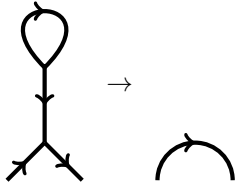
\begin{figure}
	\begin{tikzpicture}
		\draw[ultra thick,postaction={decorate, decoration = {markings, mark=at position 0.5 with {\arrow{>}}}}] (0,0) .. controls +(-1,1) and +(1,1) .. (0,0);
		\draw[ultra thick,postaction={decorate, decoration = {markings, mark=at position 0.5 with {\arrow{>}}}}] (0,0) -- (0,-1);
		\draw[ultra thick, postaction={decorate, decoration = {markings, mark=at position 0.5 with {\arrow{>}}}}] (-0.5,-1.5) -- (0,-1);
		\draw[ultra thick,postaction={decorate, decoration = {markings, mark=at position 0.65 with {\arrow{>}}}}] (0,-1) -- (0.5,-1.5);
		\node at (1,-0.25) {$\rightarrow$};
		\draw[ultra thick,postaction={decorate, decoration = {markings, mark=at position 0.5 with {\arrow{>}}}}] (1.5,-1.5) arc (180:0:0.5);
	\end{tikzpicture}
\caption{Tadpole Trim}
\label{fig:type1}
\end{figure}

\begin{figure}
	\begin{tikzpicture}
		\draw[ultra thick,postaction={decorate, decoration = {markings, mark=at position 0.5 with {\arrow{>}}}}] (0,1.5) -- (0,0.5);
		\draw[ultra thick,postaction={decorate, decoration = {markings, mark=at position 0.5 with {\arrow{<}}}}] (0,-0.5) .. controls +(-0.5,0.5) and +(-0.5,-0.5) .. (0,0.5);
		\draw[ultra thick,postaction={decorate, decoration = {markings, mark=at position 0.5 with {\arrow{<}}}}] (0,-0.5) .. controls +(0.5,0.5) and +(0.5,-0.5) .. (0,0.5); 
		\draw[ultra thick,postaction={decorate, decoration = {markings, mark=at position 0.5 with {\arrow{>}}}}] (0,-0.5) -- (0,-1.5);
		\node at (1,0) {$\rightarrow$};
		\draw[ultra thick,postaction={decorate, decoration = {markings, mark=at position 0.5 with {\arrow{>}}}}] (2,1.5) -- (2,-1.5);
	\end{tikzpicture}
\caption{Bubble Pop}
\label{fig:type2}
\end{figure}
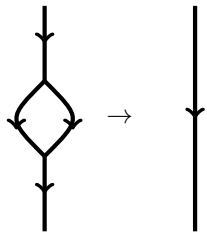

\begin{figure}
	\begin{tikzpicture}
		\draw[ultra thick, postaction={decorate, decoration={markings, mark=at position 0.5 with {\arrow{<}}}}] (-0.5,0) -- (-0.5,0.5);
		\draw[ultra thick, postaction={decorate, decoration={markings, mark=at position 0.5 with {\arrow{<}}}}] (-0.5,0.5) .. controls +(-0.25,0.25) and +(0,-0.25) .. (-1,1);
		\draw[ultra thick, postaction={decorate, decoration={markings, mark=at position 0.5 with {\arrow{<}}}}] (-1,1) -- (-1.5,1.5);
		\draw[ultra thick, postaction={decorate, decoration={markings, mark=at position 0.5 with {\arrow{<}}}}] (-0.5,0.5) -- (0.5,1.5);
		\draw[ultra thick, postaction={decorate, decoration={markings, mark=at position 0.5 with {\arrow{<}}}}] (-1,1) -- (-0.5,1.5);
		\node at (1,0.75) {$\leftrightarrow$};
		\begin{scope}[shift = {(2,0)}]
			\draw[ultra thick, postaction={decorate, decoration={markings, mark=at position 0.5 with {\arrow{<}}}}] (0.5,0) -- (0.5,0.5);
			\draw[ultra thick, postaction={decorate, decoration={markings, mark=at position 0.5 with {\arrow{<}}}}] (0.5,0.5) .. controls +(0.25,0.25) and +(0,-0.25) .. (1,1);
			\draw[ultra thick, postaction={decorate, decoration={markings, mark=at position 0.5 with {\arrow{<}}}}] (1,1) -- (1.5,1.5);
			\draw[ultra thick, postaction={decorate, decoration={markings, mark=at position 0.5 with {\arrow{<}}}}] (0.5,0.5) -- (-0.5,1.5);
			\draw[ultra thick, postaction={decorate, decoration={markings, mark=at position 0.5 with {\arrow{<}}}}] (1,1) -- (0.5,1.5);
		\end{scope}
		\begin{scope}[shift = {(7,1.5)}]
			\draw[ultra thick, postaction={decorate, decoration={markings, mark=at position 0.5 with {\arrow{>}}}}] (-0.5,0) -- (-0.5,-0.5);
			\draw[ultra thick, postaction={decorate, decoration={markings, mark=at position 0.5 with {\arrow{>}}}}] (-0.5,-0.5) .. controls +(-0.25,-0.25) and +(0,0.25) .. (-1,-1);
			\draw[ultra thick, postaction={decorate, decoration={markings, mark=at position 0.5 with {\arrow{>}}}}] (-1,-1) -- (-1.5,-1.5);
			\draw[ultra thick, postaction={decorate, decoration={markings, mark=at position 0.5 with {\arrow{>}}}}] (-0.5,-0.5) -- (0.5,-1.5);
			\draw[ultra thick, postaction={decorate, decoration={markings, mark=at position 0.5 with {\arrow{>}}}}] (-1,-1) -- (-0.5,-1.5);
			\node at (1,-0.75) {$\leftrightarrow$};
			\begin{scope}[shift = {(2,0)}]
				\draw[ultra thick, postaction={decorate, decoration={markings, mark=at position 0.5 with {\arrow{>}}}}] (0.5,0) -- (0.5,-0.5);
				\draw[ultra thick, postaction={decorate, decoration={markings, mark=at position 0.5 with {\arrow{>}}}}] (0.5,-0.5) .. controls +(0.25,-0.25) and +(0,0.25) .. (1,-1);
				\draw[ultra thick, postaction={decorate, decoration={markings, mark=at position 0.5 with {\arrow{>}}}}] (1,-1) -- (1.5,-1.5);
				\draw[ultra thick, postaction={decorate, decoration={markings, mark=at position 0.5 with {\arrow{>}}}}] (0.5,-0.5) -- (-0.5,-1.5);
				\draw[ultra thick, postaction={decorate, decoration={markings, mark=at position 0.5 with {\arrow{>}}}}] (1,-1) -- (0.5,-1.5);
			\end{scope}
		\end{scope}
	\end{tikzpicture}
\caption{$F$-Move}
\label{fig:typeF}
\end{figure}

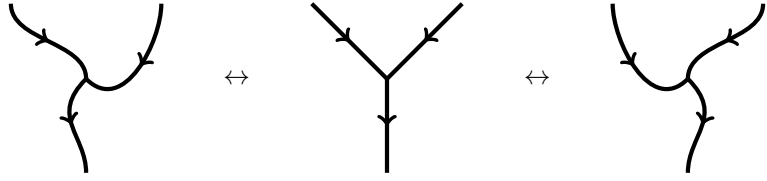
\begin{figure}
	\begin{tikzpicture}
		\draw[ultra thick, postaction={decorate, decoration={markings, mark=at position 0.5 with {\arrow{>}}}}] (0,0) -- (0,-1.25);
		\draw[ultra thick, postaction={decorate, decoration={markings, mark=at position 0.5 with {\arrow{>}}}}] (1,1) -- (0,0);
		\draw[ultra thick, postaction={decorate, decoration={markings, mark=at position 0.5 with {\arrow{>}}}}] (-1,1) -- (0,0);
		\node at (2,0) {$\leftrightarrow$};
		\begin{scope}[shift = {(4,0)}]
		\draw[ultra thick, postaction={decorate, decoration={markings, mark=at position 0.5 with {\arrow{>}}}}] (0,0) .. controls +(0.5,-0.5) and +(0,0.5) .. (0,-1.25);
		\draw[ultra thick, postaction={decorate, decoration={markings, mark=at position 0.5 with {\arrow{>}}}}] (1,1) .. controls +(0,-0.5) and +(0,0.5) .. (0,0);
		\draw[ultra thick, postaction={decorate, decoration={markings, mark=at position 0.5 with {\arrow{>}}}}] (-1,1) .. controls +(0,-0.5) and +(-0.5,-0.5) .. (0,0);
		\end{scope}
		\node at (-2,0) {$\leftrightarrow$};
		\begin{scope}[shift = {(-4,0)}]
		\draw[ultra thick, postaction={decorate, decoration={markings, mark=at position 0.5 with {\arrow{>}}}}] (0,0) .. controls +(-0.5,-0.5) and +(0,0.5) .. (0,-1.25);
		\draw[ultra thick, postaction={decorate, decoration={markings, mark=at position 0.5 with {\arrow{>}}}}] (1,1) .. controls +(0,-0.5) and +(0.5,-0.5) .. (0,0);
		\draw[ultra thick, postaction={decorate, decoration={markings, mark=at position 0.5 with {\arrow{>}}}}] (-1,1) .. controls +(0,-0.5) and +(0,0.5) .. (0,0);
		\end{scope}
	\end{tikzpicture}
\caption{Vertex Spiral (aka ``bending")}
\label{fig:vertexspiral}
\end{figure}

\begin{lem}\label{lem:trivalentgraphalg}
If $\Gamma \subset S^2$ is a closed trivalent graph embedded in $S^2$, then there is a polynomial time algorithm (in the size of the encoding of $\Gamma$) to construct a sequence of embedded graphs $\Gamma_0, \Gamma_1,\dots, \Gamma_l$ where $\Gamma_0 = \Gamma$, $\Gamma_l = \emptyset$ such that each $\Gamma_{i+1}$ is related to $\Gamma_i$ by one of the diagrammatic operations in Figures \ref{fig:type0}-\ref{fig:vertexspiral} or edge orientation reversals.
\end{lem}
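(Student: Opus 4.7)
The plan is to give a polynomial-time algorithm that, on each macro-iteration, strictly decreases $|E(\Gamma)|$ by executing a bounded number of moves, yielding a total sequence of length $O(|E(\Gamma)|)$. We process connected components of $\Gamma$ one at a time, since all moves are local in $S^2$. Fix a component $C$. First scan for configurations reducible by a single move: an isolated circle (apply Circle Removal), a tadpole (apply Tadpole Trim), or a bigon (apply Bubble Pop), each of which strictly decreases $|E(C)|$. If none of these appear then every face of $C$ has size $\ge 3$; since $C$ is a connected trivalent planar graph, Euler's formula forces $F = 2 + E/3$, so the average face size is $2E/F < 6$, and hence $C$ must contain a face $\phi$ of size $k \in \{3,4,5\}$, which we locate from the planar embedding in polynomial time.

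For such a face $\phi$ the key geometric fact is that a single $F$-move along an edge of $\phi$ can always be chosen so that $\phi$ shrinks by one side: the $F$-move has three possible outcomes---the three binary trees on four leaves---and exactly one of them causes the two ``external'' endpoints on the $\phi$-side to fuse, so that a new vertex appears whose remaining two edges both lie on $\partial\phi$. After at most $k-2 \le 3$ such $F$-moves $\phi$ has become a bigon, after which one Bubble Pop removes it. Before each $F$-move we insert a bounded number of edge-orientation reversals and Vertex Spirals to bring the local picture into the canonical form of Figures \ref{fig:typeF} and \ref{fig:vertexspiral}; since each vertex has only three cyclic states and each edge only two orientations, only $O(1)$ such adjustments are ever needed. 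Each macro-iteration thus removes at least two vertices using $O(1)$ total moves, giving an overall sequence of length $O(|V(\Gamma)|)$, and every step is polynomial-time constructible by inspecting the planar embedding.

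The main obstacle will be verifying the above geometric claim in the presence of degenerate configurations: $\phi$ might have a repeated vertex on its boundary, or share more than one edge with an adjacent face, and in these situations the ``wrong'' $F$-move could fail to shrink $\phi$ or could accidentally create a new tadpole or isolated loop. I expect to handle this by a case analysis: either (i) the correct $F$-move outcome still exists and does shrink $\phi$, or else (ii) the configuration produced already contains a tadpole, bigon, or circle that is peeled off at the start of the next macro-iteration without net loss of progress. The fact that $\phi$ bounds an embedded open disk in $S^2$ with a boundary walk fixed by the planar embedding provides enough rigidity to make this analysis go through, but the bookkeeping around cyclic orders at vertices shared by multiple boundary traversals of $\phi$ is the subtle point that will require the most care.
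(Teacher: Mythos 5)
Your proposal is essentially the paper's proof: both are greedy algorithms that pick a complementary face, shrink its boundary one edge at a time via $F$-moves (after normalizing the local picture with vertex spirals and edge-orientation reversals) until it becomes a bubble, then pop it, with circles, tadpoles, and bubbles handled directly as base cases. The one noteworthy refinement you add is the Euler-formula observation that a connected trivalent planar graph always has a face of size at most $5$, so at most $3$ $F$-moves are needed per face; the paper instead picks an arbitrary face and shrinks it however long that takes, tracking termination by the number of complementary regions (which drops by one each time a face is eliminated) rather than by vertex count. The degeneracy issues you flag at the end (repeated boundary vertices, multiple shared edges) are precisely the same ones the paper's proof glosses over---it merely asserts that vertex spirals and orientation reversals can be applied ``as appropriate''---so your explicit acknowledgment of that gap, together with the case-(i)/(ii) plan for closing it, matches and if anything is more candid than the paper's own sketch.
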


\begin{proof}
A simple greedy algorithm suffices.  We sketch the idea.

Begin by greedily choosing a complementary region $R$ of $\Gamma$, \emph{i.e.}\ a connected component $R$ of $S^2 \setminus \Gamma$.
Note that we may identify the boundary edges and vertices of $R$ in polynomial time.
Suppose $R$ has $k$ unique edges and $l$ unique vertices on its boundary.
Now simplify and update $\Gamma$ according to the following cases.
\begin{enumerate}
\item If $(k,l)=(0,0)$, then $\Gamma=\emptyset$, and so we terminate.
\item If $(k,l)=(1,0)$, then the boundary of $R$ is a circle in $\Gamma$, which we remove as in Figure \ref{fig:type0}. 
\item If $(k,l) = (1,1)$, then the boundary of $R$ is part of a tadpole, which we trim as in Figure \ref{fig:type1}, but possibly only after first applying an appropriate set of vertex spirals as in Figure \ref{fig:vertexspiral} and edge orientation reversals. 
\item If $(k,l) = (2,2)$, then the boundary of $R$ is part of a bubble, which we pop as in Figure \ref{fig:type2}, but possibly only after first applying an appropriate set of vertex spirals and edge orientation reversals.
\item Otherwise, $k=l>2$.
Greedily pick an edge $e$ on the boundary of $R$.
After perhaps first applying up to two vertex spirals and 5 edge orientation reversals, we can arrange so that around $e$, $\Gamma$ looks like one of the four diagrams in Figure \ref{fig:typeF}, with $e$ the edge in the middle.
Apply the available $F$-move around $e$.
The complementary regions of the resulting graph are naturally in bijection with the regions of the previous graph (see Figure \ref{fig:algorithmexample} for an example).
Let $R'$ be the region of the new graph associated with $R$.
If $R'$ has $k=l>2$ edges on its boundary, then repeat what we just did, but with $R'$ and the new graph, instead of $R$;
otherwise, $R'$ has $k=l=2$ edges, and we pop the bubble as in case (3).
\end{enumerate}
Repeat this process of greedily picking a region $R$ and proceeding as in the above cases.
Each step of identifying an $R$ and carrying through the appropriate case takes polynomial time, and, moreover, reduces the number of complementary regions of $\Gamma$ by 1.
Since there are at most a polynomial number of complementary regions to begin with, the entire procedure takes place in polynomial time.
\end{proof}

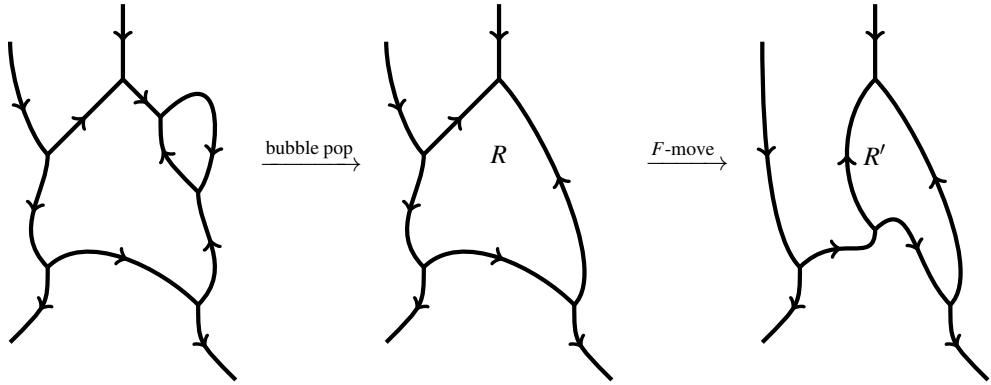
\begin{figure}
\begin{tikzpicture}
		\begin{scope}[shift = {(0,0)}]
			\draw[ultra thick,postaction={decorate, decoration = {markings, mark=at position 0.5 with {\arrow{>}}}}] (0,0) -- (0,-1);
			\draw[ultra thick, postaction={decorate, decoration = {markings, mark=at position 0.5 with {\arrow{>}}}}] (-1,-2) -- (0,-1);
			\draw[ultra thick,postaction={decorate, decoration = {markings, mark=at position 0.65 with {\arrow{>}}}}] (0,-1) -- (0.5,-1.5);
			\draw[ultra thick, postaction={decorate, decoration = {markings, mark=at position 0.75 with {\arrow{>}}}}] (0.5,-1.5) .. controls +(1,1) and +(0.25,0.25) .. (1,-2.5);
			\draw[ultra thick, postaction={decorate, decoration = {markings, mark=at position 0.5 with {\arrow{<}}}}] (0.5,-1.5) .. controls +(0,-0.5) and +(-0.5,0.5) .. (1,-2.5);
			\draw[ultra thick, postaction={decorate, decoration = {markings, mark=at position 0.5 with {\arrow{<}}}}] (1,-2.5) .. controls +(0,-0.5) and +(0.5,0.5) .. (1,-4);
			\draw[ultra thick, postaction={decorate, decoration = {markings, mark=at position 0.5 with {\arrow{>}}}}] (-1,-2) .. controls +(0,-0.5) and +(-0.5,0.5) .. (-1,-3.5);
			\draw[ultra thick, postaction={decorate, decoration = {markings, mark=at position 0.5 with {\arrow{<}}}}] (-1,-2) .. controls +(-0.25,0.25) and +(0,-0.5) .. (-1.5,-0.5);
			\draw[ultra thick, postaction={decorate, decoration = {markings, mark=at position 0.5 with {\arrow{>}}}}] (-1,-3.5) .. controls +(0.5,0.5) and +(-0.5,0.5) .. (1,-4);
			\draw[ultra thick, postaction={decorate, decoration = {markings, mark=at position 0.5 with {\arrow{>}}}}] (-1,-3.5) .. controls +(0,-0.5) and +(0.5,0.5) .. (-1.5,-4.5);
			\draw[ultra thick, postaction={decorate, decoration = {markings, mark=at position 0.5 with {\arrow{>}}}}] (1,-4) .. controls +(0,-0.5) and +(-0.5,0.5).. (1.5,-5);
			\node at (2.5,-2) {$\xrightarrow{\text{bubble pop}}$};
		\end{scope}
		\begin{scope}[shift = {(5,0)}]
			\draw[ultra thick,postaction={decorate, decoration = {markings, mark=at position 0.5 with {\arrow{>}}}}] (0,0) -- (0,-1);
			\draw[ultra thick, postaction={decorate, decoration = {markings, mark=at position 0.5 with {\arrow{>}}}}] (-1,-2) -- (0,-1);
			\draw[ultra thick, postaction={decorate, decoration = {markings, mark=at position 0.5 with {\arrow{<}}}}] (0,-1) .. controls +(0.5,-0.5) and +(0.5,0.5) .. (1,-4);
			\draw[ultra thick, postaction={decorate, decoration = {markings, mark=at position 0.5 with {\arrow{>}}}}] (-1,-2) .. controls +(0,-0.5) and +(-0.5,0.5) .. (-1,-3.5);
			\draw[ultra thick, postaction={decorate, decoration = {markings, mark=at position 0.5 with {\arrow{<}}}}] (-1,-2) .. controls +(-0.25,0.25) and +(0,-0.5) .. (-1.5,-0.5);
			\draw[ultra thick, postaction={decorate, decoration = {markings, mark=at position 0.5 with {\arrow{>}}}}] (-1,-3.5) .. controls +(0.5,0.5) and +(-0.5,0.5) .. (1,-4);
			\draw[ultra thick, postaction={decorate, decoration = {markings, mark=at position 0.5 with {\arrow{>}}}}] (-1,-3.5) .. controls +(0,-0.5) and +(0.5,0.5) .. (-1.5,-4.5);
			\draw[ultra thick, postaction={decorate, decoration = {markings, mark=at position 0.5 with {\arrow{>}}}}] (1,-4) .. controls +(0,-0.5) and +(-0.5,0.5).. (1.5,-5);
			\node at (2.5,-2) {$\xrightarrow{\text{$F$-move }}$};
			\node at (0,-2) {$R$};
		\end{scope}
		\begin{scope}[shift = {(10,0)}]
			\draw[ultra thick,postaction={decorate, decoration = {markings, mark=at position 0.5 with {\arrow{>}}}}] (0,0) -- (0,-1);
			\draw[ultra thick, postaction={decorate, decoration = {markings, mark=at position 0.5 with {\arrow{>}}}}] (0,-3) .. controls +(-0.5,0.5) and +(-0.5,-0.5) .. (0,-1);
			\draw[ultra thick, postaction={decorate, decoration = {markings, mark=at position 0.5 with {\arrow{<}}}}] (0,-1) .. controls +(0.5,-0.5) and +(0.5,0.5) .. (1,-4);
			\draw[ultra thick, postaction={decorate, decoration = {markings, mark=at position 0.5 with {\arrow{>}}}}] (-1.5,-0.5) .. controls +(0,-0.5) and +(-0.5,0.5) .. (-1,-3.5);
			\draw[ultra thick, postaction={decorate, decoration = {markings, mark=at position 0.5 with {\arrow{>}}}}] (-1,-3.5) .. controls +(0.5,0.5) and +(0,-0.5) .. (0,-3);
			\draw[ultra thick, postaction={decorate, decoration = {markings, mark=at position 0.5 with {\arrow{>}}}}] (-1,-3.5) .. controls +(0,-0.5) and +(0.5,0.5) .. (-1.5,-4.5);
			\draw[ultra thick, postaction={decorate, decoration = {markings, mark=at position 0.5 with {\arrow{>}}}}] (0,-3) .. controls +(0.5,0.5) and +(-0.5,0.5) .. (1,-4);
			\draw[ultra thick, postaction={decorate, decoration = {markings, mark=at position 0.5 with {\arrow{>}}}}] (1,-4) .. controls +(0,-0.5) and +(-0.5,0.5).. (1.5,-5);
			\node at (0,-2) {$R'$};
		\end{scope}

	\end{tikzpicture}
\caption{An example of a portion of the algorithm}
\label{fig:algorithmexample}
\end{figure}

\begin{proof}[Proof of Theorem \ref{thm:main}(b)]
Let $\B$ be a modular fusion category over $\mathbb{C}$. As in part (a), it suffices to build a domain $D_\B$ and weighted constraint set $\F_\B$ for which there is a polynomial time algorithm to encode a surgery diagram for a 3-manifold $M$ into an instance $I_M$ of $\# \CSP(\F_\B)$ such that \[Z(I_M) = \tau_\B(M)\]

Let us recall the formula for $\tau_\B(M)$: \[ \tau_\B(M) = p_-^{\frac{\sigma(L)-m-1}{2}}p_+^{\frac{-\sigma(L)-m-1}{2}} \sum_{\mathrm{col}:\{1,\dots,m\}\to \Irr(\B)}\left (\prod_{j=1}^m \dim(\mathrm{col}(j))\right )|L^\mathrm{col}|\] where our notation is as follows:
\begin{itemize}
	\item $L$ is the surgery link diagram that defines the 3-manifold $M$.  $L$ consists of $m$ components (labeled $1,2,\dots,m$) and, for convenience is endowed with the blackboard framing.\footnote{To justify this convenience, simply apply a Reidemeister move of type 1 to each the components of $L$ so that the blackboard framing agrees with the desired integral surgery coefficients.  This can be done in polynomial time because we encode the surgery coefficents in unary.}  We will also assume, for convenience, that $L$ is in ``standard plat position." This means that all of the local minima of the diagram (which, recall, is a picture in the $xy$-plane) occur below all crossings, all local maxima of the diagram occur above all crossings, and the sets of cups and caps have no ``nesting".\footnote{This convenience can be justified by the fact that any link diagram can be put in standard plat position in polynomial time by simply applying a sequence of Reidemeister 2 moves.}  This means $L$ is entirely determined by a braid word.  See Figure \ref{fig:plat}.
	\item $\sigma(L)$ is the signature of $L$: the number of positive eigenvalues of the linking matrix minus the number of negative eigenvalues.  (This can be computed in polynomial time from $L$.)
	\item $p_\pm = \sum_{i\in\Irr(\B)} \theta_i^{\pm} (\dim(i))^2$ are the Gauss sums of $\B$.
	\item $|L^{\mathrm{col}}|$ is the evaluation of the colored ribbon graph defined by coloring the components of $L$ by $\mathrm{col}$.
\end{itemize}

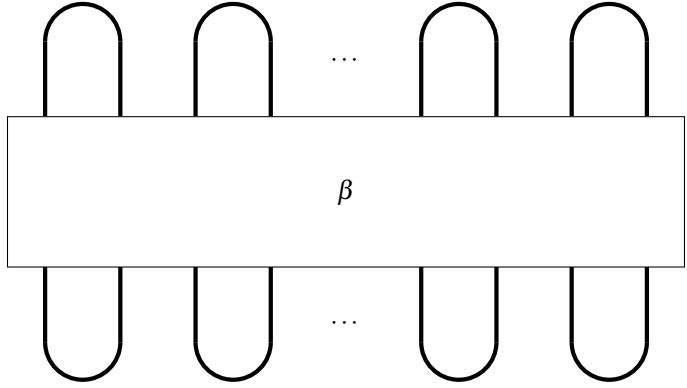
\begin{figure}
\label{fig:plat}
\begin{tikzpicture}
	\foreach \x in {-4,-3,-2,-1,1,2,3,4} {
		\draw[ultra thick] (\x,2) -- (\x,-2);
	}
	\foreach \x in {-4,-2,1,3} {
		\draw[ultra thick] (\x,-2) arc (-180:0:0.5);
		\draw[ultra thick] (\x,2) arc (180:0:0.5);
	}
	\node[draw = black, fill = white, rectangle, minimum width = 9cm, minimum height = 2cm] at (0,0) {$\beta$};
	\node at (0,1.75) {$\cdots$};
	\node at (0,-1.75) {$\cdots$};
\end{tikzpicture}
\caption{A blackboard-framed link $L$ in standard plat position is determined by a braid word $\beta \in B_{2k}$.}
\end{figure}

We now define \[D_\B \defeq \Irr(\B)\sqcup \N\sqcup \{*\}\] where $\N$ is the set of labels of the trivalent Hom space as in the proof of Theorem \ref{thm:main}(a). 
As hinted at in Lemma \ref{lem:trivalentgraphalg}, we need $\CC$-valued constraint functions on the domain $D_\B$ that implement bubble pops, tadpole removals, $F$-moves, \emph{etc}.
We define them as follows.

\begin{center}
\begin{tikzpicture}
	\node at (-3.75,0) {$BP(i,j,k,k',\alpha,\beta)$};
	\draw[ultra thick, postaction = {decorate, decoration = {markings, mark = at position 0.5 with {\arrow{>}}}}] (-2,1.5) -- (-2,-1.5);
	\node[right] at (-2,-1.25) {$k$};
	\node at (-1.25,0) {$\defeq$};
	\draw[ultra thick, postaction = {decorate, decoration = {markings, mark = at position 0.5 with {\arrow{>}}}}] (0,1.5) -- (0,0.5);
	\draw[ultra thick, postaction = {decorate, decoration = {markings, mark = at position 0.65 with {\arrow{>}}}}] (0,-0.5) -- (0,-1.5);
	\draw[ultra thick, postaction = {decorate, decoration = {markings, mark = at position 0.65 with {\arrow{>}}}}] (-0.25,0.5) -- (-0.25, -0.5);
	\draw[ultra thick, postaction = {decorate, decoration = {markings, mark = at position 0.65 with {\arrow{>}}}}] (0.25,0.5) -- (0.25, -0.5);
	\node[draw = black, rectangle, fill = white, minimum width = 1cm] at (0,0.5) {$\beta$};
	\node[draw=black, rectangle, fill=white, minimum width = 1cm] at (0,-0.5) {$\alpha$};
	\node[right] at (0,-1.25) {$k$};
	\node[right] at (0,1.25) {$k'$};
	\node[left] at (-0.35,0) {$i$};
	\node[right] at (0.35,0) {$j$};
\end{tikzpicture}
\end{center}
where $i,j,k,k'\in \Irr(\B)$, $\alpha\in \{1,\dots, N_{ij}^k\}$, and $\beta\in \{1,\dots, N_{ij}^{k'}\}$. These implement the bubble pop.

\begin{center}
\begin{tikzpicture}
	\node at (-3.5,0) {$TT(i,j,k,k',\alpha,\beta)$};
	\draw[ultra thick, postaction = {decorate, decoration = {markings, mark = at position 0.5 with {\arrow{>}}}}] (-2,-0.5) .. controls +(0,1) and +(0,1) .. (-1,-0.5);
	\node[right] at (-1,-0.5) {$k$};
	\node at (0,0) {$\defeq$};
	\begin{scope}[shift = {(1.5,0)}]
	\draw[ultra thick, postaction = {decorate, decoration = {markings, mark = at position 0.65 with {\arrow{>}}}}] (0,0.5) -- (0,-0.5);
	\draw[ultra thick, postaction = {decorate, decoration = {markings, mark = at position 0.5 with {\arrow{>}}}}] (-0.35,0.5) .. controls +(-0.25,1) and +(0.25,1) .. (0.35, 0.5);
	\draw[ultra thick, postaction = {decorate, decoration = {markings, mark = at position 0.65 with {\arrow{<}}}}] (-0.25,-0.5) -- (-0.25,-1.5);
		\draw[ultra thick, postaction = {decorate, decoration = {markings, mark = at position 0.65 with {\arrow{>}}}}] (0.25,-0.5) -- (0.25,-1.5);
	\node[draw = black, rectangle, fill = white, minimum width = 1cm] at (0,0.5) {$\beta$};
	\node[draw=black, rectangle, fill=white, minimum width = 1cm] at (0,-0.5) {$\alpha$};
	\node[right] at (0.35,-1.25) {$k$};
	\node[left] at (-0.35,-1.25) {$k'$};
	\node[left] at (-0.25,1.25) {$i$};
	\node[right] at (0,0) {$j$};
	\end{scope}
\end{tikzpicture}
\end{center}
where $i,j,k,k'\in \Irr(\B)$, $\alpha\in\{1,\dots,N_{k'^* k}^j\}$, $\beta\in \{1,\dots, N_{i^*i}^j\}$. These implement tadpole trims. 

We define functions $VS^{\mathrm{left}}$ by requiring
\begin{center}
\begin{tikzpicture}
	\draw[ultra thick, postaction = {decorate, decoration = {markings, mark = at position 0.65 with {\arrow{>}}}}] (-0.25,0) -- (-0.25,-1);
	\draw[ultra thick, postaction = {decorate, decoration = {markings, mark = at position 0.5 with {\arrow{>}}}}] (0,1) -- (0,0);
	\draw[ultra thick] (0.25,0) arc [start angle=-180, delta angle=180, x radius=0.25cm, y radius = 0.75cm];
	\draw[ultra thick, postaction = {decorate, decoration = {markings, mark = at position 0.65 with {\arrow{>}}}}] (0.75,0) -- (0.75,1);
	\node[draw = black, rectangle, fill = white, minimum width = 1cm] at (0,0) {$\alpha$};
	\node[left] at (-0.25,-0.75) {$i$};
	\node[right] at (0.75,0.75) {$j$};
	\node[left] at (0,0.75) {$k$};
	\node at (1.5,0) {$=$};
	\begin{scope}[shift = {(7,0)}]
		\node at (-3,0) {$\displaystyle\sum_{\substack{\beta\in \{1,\dots ,N_{kj^*}^i\}}} VS^{left}(i,j,k,\alpha,\beta)$};
		\draw[ultra thick, postaction = {decorate, decoration = {markings, mark = at position 0.65 with {\arrow{>}}}}] (0,0) -- (0,-1);
	\draw[ultra thick, postaction = {decorate, decoration = {markings, mark = at position 0.5 with {\arrow{>}}}}] (-0.25,1) -- (-0.25,0);
	\draw[ultra thick, postaction = {decorate, decoration = {markings, mark = at position 0.65 with {\arrow{>}}}}] (0.25,0) -- (0.25,1);
	\node[draw = black, rectangle, fill = white, minimum width = 1cm] at (0,0) {$\beta$};
	\node[left] at (0,-0.75) {$i$};
	\node[right] at (0.25,0.75) {$j$};
	\node[left] at (-0.25,0.75) {$k$};
	\end{scope}
\end{tikzpicture}
\end{center}
where $i,j,k\in \Irr(\B)$ and $\alpha\in \{1,\dots ,N_{ij}^k\}$. These implement the ``left" vertex spin.
We similarly define $VS^{\mathrm{right}}$ coefficients to implement the right vertex spin.

We need the $F$-matrices, which have coefficients $F^+$ that satisfy the equation
\begin{center}
\begin{tikzpicture}
	\draw[ultra thick, postaction = {decorate, decoration = {markings, mark = at position 0.5 with {\arrow{>}}}}] (-2,2) .. controls +(0.5,-0.5) and +(0,0.5) .. (-1.25,1);
	\draw[ultra thick, postaction = {decorate, decoration = {markings, mark = at position 0.5 with {\arrow{>}}}}] (-1,1) .. controls +(0,-0.75) and +(0,0.75) .. (-0.25,0);
	\draw[ultra thick, postaction = {decorate, decoration = {markings, mark = at position 0.65 with {\arrow{>}}}}] (0,0) -- (0,-1);
	\draw[ultra thick, postaction = {decorate, decoration = {markings, mark = at position 0.5 with {\arrow{<}}}}] (0.25,0) .. controls +(0,0.75) and +(-0.5,-0.75) .. (2,2);
	\draw[ultra thick, postaction = {decorate, decoration = {markings, mark = at position 0.5 with {\arrow{<}}}}] (-0.75,1) .. controls +(0,0.5) and +(-0.5,-0.5) .. (0,2);
	\node[draw=black, rectangle, fill=white, minimum width = 1cm] at (-1,1) {$\alpha$};
	\node[draw=black, rectangle, fill=white, minimum width = 1cm] at (0,0) {$\beta$};
	\node[left] at (-2,1.75) {$a$};
	\node[left] at (-0.5,1.75) {$b$};
	\node[left] at (1.5,1.75) {$c$};
	\node[left] at (-0.75,0.4) {$m$};
	\node[left] at (0,-0.75) {$d$};
	\node at (5,0) {$=\displaystyle\sum_{\substack{m\in \Irr(\B) \\ \delta\in\{1,\dots,N_{bc}^m\} \\ \gamma\in\{1,\dots, N_{am}^d\}}}F^+(a,b,c,d,m,n,\alpha,\beta,\delta,\gamma)$};
	\begin{scope}[shift = {(10,0)}]
	\draw[ultra thick, postaction = {decorate, decoration = {markings, mark = at position 0.5 with {\arrow{>}}}}] (-2,2) .. controls +(0.5,-0.75) and +(0,0.75) .. (-0.25,0);
	\draw[ultra thick, postaction = {decorate, decoration = {markings, mark = at position 0.5 with {\arrow{>}}}}] (2,2) .. controls +(-0.5,-0.5) and +(0,0.5) .. (1.25,1);
	\draw[ultra thick, postaction = {decorate, decoration = {markings, mark = at position 0.5 with {\arrow{>}}}}] (1,1) .. controls +(0,-0.75) and +(0,0.75) .. (0.25,0);
	\draw[ultra thick, postaction = {decorate, decoration = {markings, mark = at position 0.65 with {\arrow{>}}}}] (0,0) -- (0,-1);
	\draw[ultra thick, postaction = {decorate, decoration = {markings, mark = at position 0.5 with {\arrow{<}}}}] (0.75,1) .. controls +(0,0.5) and +(0.5,-0.5) .. (0,2);
	\node[draw=black, rectangle, fill=white, minimum width = 1cm] at (1,1) {$\delta$};
	\node[draw=black, rectangle, fill=white, minimum width = 1cm] at (0,0) {$\gamma$};
	\node[left] at (-2,1.75) {$a$};
	\node[left] at (0,1.75) {$b$};
	\node[left] at (1.5,1.75) {$c$};
	\node[right] at (0.75,0.4) {$m$};
	\node[left] at (0,-0.75) {$d$};
	\end{scope}
\end{tikzpicture}
\end{center}
where $a,b,c,d,m\in \Irr(\B)$, $\alpha\in \{1,\dots, N_{ab}^m\}$, and $\beta\in \{1,\dots, N_{mc}^d\}$. The inverse $F$-matrix has coefficients that we denote by $F^-$. We also need to include the matrix coefficients implementing the upside-down version of this picture. We call these $G^\pm$, respectively.

We then extend the above to 6-ary and 10-ary functions on our domain:

\[F^\pm(x_1,x_2,\dots ,x_{10}) \defeq \begin{cases}
F^\pm(x_1,x_2,\dots, x_{10}) & \text{ if }x_1,\dots ,x_6\in \Irr(\B)\text{ and }x_7,\dots ,x_{10}\in \N\\ 0 & \text{ otherwise}
\end{cases}\] 

\[G^\pm(x_1,x_2,\dots ,x_{10}) \defeq \begin{cases}
G^\pm(x_1,x_2,\dots, x_{10}) & \text{ if }x_1,\dots ,x_6\in \Irr(\B)\text{ and }x_7,\dots ,x_{10}\in \N\\ 0 & \text{ otherwise}
\end{cases}\] 

\[BP(x_1,x_2,\dots ,x_6) \defeq \begin{cases}
BP(x_1,x_2,\dots ,x_6) & \text{ if }x_1,x_2,x_3,x_4\in \Irr(\B) \text{ and }x_5,x_6\in \N \\ 0 & \text{ otherwise}
\end{cases}\] 

\[TT(x_1,x_2,\dots ,x_6) \defeq \begin{cases}
TT(x_1,x_2,\dots ,x_6) & \text{ if }x_1,x_2,x_3,x_4\in \Irr(\B) \text{ and }x_5,x_6\in \N \\ 0 & \text{ otherwise}
\end{cases}\] 

\[VS^*(x_1,x_2,x_3,x_4,x_5) \defeq \begin{cases}
VS^*(x_1,x_2,x_3,x_4,x_5) & \text{ if } x_1,x_2,x_3\in \Irr(\B) \text{ and }x_4,x_5 \in \N \\ 0 & \text{ otherwise}
\end{cases}\] for $*\in \{\mathrm{left},\mathrm{right}\}$.

We also need to implement braidings, which are described diagrammatically via $R$-moves. Recall the definition of the $R$-symbols: for $i,j,k\in \Irr(\B)$ and $\alpha\in \{1,\dots ,N_{ji}^k\}$, they satisfy
\begin{center}
\begin{tikzpicture}
	\draw[ultra thick, postaction = {decorate, decoration = {markings, mark = at position 0.75 with {\arrow{>}}}}] (-0.75,2) .. controls +(0.5,-1) and +(0,1) .. (-0.25,0.5);
		\draw[white, line width = 6pt] (-0.25,2) .. controls +(-0.5,-1) and +(0,1) .. (-0.75,0.5);
		\draw[ultra thick, postaction = {decorate, decoration = {markings, mark = at position 0.75 with {\arrow{>}}}}] (-0.25,2) .. controls +(-0.5,-1) and +(0,1) .. (-0.75,0.5);
		\draw[ultra thick, postaction = {decorate, decoration = {markings, mark = at position 0.75 with {\arrow{>}}}}] (-0.5,0.5) -- (-0.5,-0.5);
		\node[draw = black, fill = white, rectangle, minimum width = 1cm] at (-0.5,0.5) {$\alpha$};
		\node[left] at (-0.75,2) {$i$};
		\node[right] at (-0.25,2) {$j$};
		\node[left] at (-0.5,-0.5) {$k$};
	\node at (1,0.5) {$=$};
		\begin{scope}[shift = {(6.5,0)}]
		\node at (-3,0.5) {$\displaystyle \sum_{\beta\in\{1,\dots ,N_{ij}^k\}} R^+(i,j,k,\alpha,\beta)$};
		\draw[ultra thick, postaction = {decorate, decoration = {markings, mark = at position 0.75 with {\arrow{>}}}}] (-0.25,2) -- (-0.25,0.5);
		\draw[ultra thick, postaction = {decorate, decoration = {markings, mark = at position 0.75 with {\arrow{>}}}}] (-0.75,2) -- (-0.75,0.5);
		\draw[ultra thick, postaction = {decorate, decoration = {markings, mark = at position 0.75 with {\arrow{>}}}}] (-0.5,0.5) -- (-0.5,-0.5);
		\node[draw = black, fill = white, rectangle, minimum width = 1cm] at (-0.5,0.5) {$\beta$};
		\node[left] at (-0.75,2) {$i$};
		\node[right] at (-0.25,2) {$j$};
		\node[left] at (-0.5,-0.5) {$k$};
		\end{scope}
	\end{tikzpicture}
\end{center}
The inverse $R$-symbol $R^-(i,j,k,\alpha,\beta)$ is similarly defined to describe the inverse braiding.
We turn these $R$-symbols into 5-ary functions on the entire domain $D_\B$ in the same trivial way as before, namely
\[R^{\pm}(x_1,x_2,x_3,x_4,x_5) \defeq \begin{cases}R^\pm(x_1,x_2,x_3,x_4,x_5) & \text{ if }x_1,x_2,x_3\in \Irr(\B) \text{ and } x_4,x_5\in \N \\ 0 & \text{ otherwise}\end{cases}\]

Finally, we define 1-ary dimension functions, 1-ary Gauss sum functions (and their inverses), 1-ary dual functions, and 2-ary Kronecker delta functions as follows (respectively):
\[d(x) \defeq \begin{cases} \dim(x) & \text{ if } x\in \Irr(\B) \\ 0 & \text{ otherwise}\end{cases}\]
\[d^2(x) \defeq (d(x))^2\]
\[\sqrt{p_{\pm}}(x) \defeq\begin{cases} \left (\sum_{j\in\Irr(\B)} \theta_j^{\pm 1}\dim(j)^2\right )^{1/2} & \text{ if } x = * \\ 0 & \text{ otherwise }\end{cases}\]
\[\sqrt{p_\pm}^{-1}(x) \defeq \begin{cases}(\sqrt{p_{\pm}}(x))^{-1} & \text{ if } x = * \\ 0 & \text{ otherwise}\end{cases}\] 
\[\delta(x_1,x_2) \defeq \begin{cases}\delta_{x_1,x_2} & \text{ if } x_1,x_2\in \Irr(\B) \\ 0 & \text{ otherwise}\end{cases}\]

With all of this, we define our weighted constraint family $\F_\B$ to be
\[\F_\B \defeq \{F^{\pm}, G^{\pm}, BP, TT, VS^{\mathrm{left}}, VS^{\mathrm{right}}, R^{\pm}, d, d^2, \sqrt{p_{\pm}}, \sqrt{p_{\pm}}^{-1}, \delta\}.\]
We reiterate that all of this is computed independently of $M$, and can simply be considered as part of what it means to ``have the data" of $\B$.

We conclude our proof by describing how to encode a surgery presentation of $M$ into an instance $I_M$ of $\#\CSP(\F_\B)$.
In addition to the conveniences described above, we assume that the surgery link diagram $L$ is oriented, embedded in $\mathbb{R}\times [-1,2]$, and is given by plat closure of a braid word $b_1b_2\cdots b_n$ so that each crossing corresponding to $b_i$ lies in $\mathbb{R}\times (\frac{i-1}{n}, \frac{i}{n})$ and the only maxima or minima lie in $\mathbb{R}\times ([-1,0]\cup[1,2])$.  

In order to describe the variables that will be involved in the instance $I_M$ we want, we first describe a polynomial time algorithm to replace $L$ with a planar trivalent graph $\Gamma_L \subset S^2$:
\begin{enumerate}
	\item At each $\mathbb{R}\times \{\frac{i-1}{n}\}$, insert trivalent vertices to resolve the identity (see Figure \ref{fig:resolvingidentity}) so that there is a vertex directly adjacent to the crossing.
	\item Perform an $R$-move for each crossing, resulting in a trivalent graph in $\mathbb{R}^2 \subset S^2$ (after potentially scaling so the vertices lie in $\mathbb{R}\times \mathbb{Z}$).
\end{enumerate}

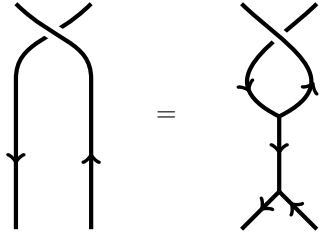
\begin{figure}
	\begin{tikzpicture}
		\draw[ultra thick, postaction = {decorate, decoration = {markings, mark = at position 0.75 with {\arrow{>}}}}] (0,2) .. controls +(-0.5,-0.5) and +(0,0.5) .. (-1,1) -- (-1,-1);
		\draw[white, line width = 6pt] (-1,2) .. controls +(0.5,-0.5) and +(0,0.5) .. (0,1);
		\draw[ultra thick, postaction = {decorate, decoration = {markings, mark = at position 0.75 with {\arrow{<}}}}] (-1,2) .. controls +(0.5,-0.5) and +(0,0.5) .. (0,1) -- (0,-1);
		\node at (1,0.5) {$=$};
		\begin{scope}[shift = {(3,0)}]
		\draw[ultra thick, postaction = {decorate, decoration = {markings, mark = at position 0.75 with {\arrow{>}}}}] (0,2) .. controls +(-0.5,-0.5) and +(-1,0.5) .. (-0.5,0.5);
		\draw[white, line width = 6pt] (-1,2) .. controls +(0.5,-0.5) and +(1,0.5) .. (-0.45,0.45);
		\draw[ultra thick, postaction = {decorate, decoration = {markings, mark = at position 0.75 with {\arrow{<}}}}] (-1,2) .. controls +(0.5,-0.5) and +(1,0.5) .. (-0.5,0.5);
		\draw[ultra thick, postaction = {decorate, decoration = {markings, mark = at position 0.5 with {\arrow{>}}}}] (-0.5,0.5) -- (-0.5,-0.5);
		\draw[ultra thick, postaction = {decorate, decoration = {markings, mark = at position 0.5 with {\arrow{>}}}}] (-0.5,-0.5) -- (-1,-1);
		\draw[ultra thick, postaction = {decorate, decoration = {markings, mark = at position 0.5 with {\arrow{<}}}}] (-0.5,-0.5) -- (0,-1);
		\end{scope}
	\end{tikzpicture}
\caption{Inserting trivalent vertices to resolve the identity}
\label{fig:resolvingidentity}
\end{figure}

We can now identify the tuple of variables (valued in the domain $D_\B$) that will be involved in our desired instance $I_M$.
Recall that $m$ is the number of components of $L$, $\sigma(L)$ is the signature, and $n$ is the number of crossings.
Let $\Gamma_0 = \Gamma_L, \Gamma_1, \dots, \Gamma_l = \emptyset$ be the sequence of graphs provided by Lemma \ref{lem:trivalentgraphalg}, and define $N_v$ to be the sum of the number of vertices in all of these graphs.
Similarly, define $N_e$ to be the sum of the number of edges in the graphs.
Then the variables of $I_M$ are described by a tuple
\[\x_M \defeq (x_0,x_1, \dots ,x_m, y_1, \dots ,y_{|\sigma(L)|}, z_1, \dots, z_n, u_1,\dots, u_{N_v}, w_1, \dots, w_{N_e}).\] 

To determine the functions involved in $I_M$, we simply need to keep appropriate account of the sequence of diagrammatic operations involved in taking $L$ to $\Gamma_L$ and then taking $\Gamma_L$ to $\emptyset$.
For each diagrammatic operation in the algorithms above, we will define a list of functions in $\F_\B$ which account for the contribution of those operations to $\tau_\B(M)$.

The first operations are those involved in step (1) of the process of taking $L$ to $\Gamma_L$, and involve the insertion of ``resolutions of the identity" at every crossing of $L$.
To account for these operations, we define a set $\mathrm{Init}$ that is a list of Kronecker delta functions, each of which pairs up edges which used to belong to the same link component before the operation.
For example, in Figure \ref{fig:resolvingidentity}, the top-right edge of the upper vertex and the bottom-right edge of the lower vertex were a part of the same link component before the operation, so we introduce a Kronecker delta function between the associated variables.

We then account for the operations in step (2) of the process of taking $L$ to $\Gamma_L$, all of which are $R$-moves.
Each operation happens locally on the diagram, so we define lists $R_i$ for $1\leq i \leq n$ that are given by $R^\pm(x_{i_1},x_{i_2},x_{i_3},x_{i_4},x_{i_5})$, where we use $+$ or $-$ depending on the strand that crosses over, $x_{i_1},x_{i_2},x_{i_3}$ are the edges in the trivalent vertex in the relevant order, $x_{i_4}$ is the labeling of the vertex before the $R$-move, and $x_{i_5}$ is the labeling of the vertex after the operation.

The next operations are given using the algorithm of Lemma \ref{lem:trivalentgraphalg}.
The algorithm provides an ordered list of operations $M_1,\dots, M_{l}$, where upon completion of the final operation $M_l$, the graph $\Gamma_l$ is empty.
Each operation here is local, so we need only consider the local changes when defining our list.
Consider operation $M_i$ in this sequence:

\begin{itemize}
\item If $M_i$ is a circle removal, define a list $C_i$ which contains the relevant $d$ function.
\item If $M_i$ is a tadpole trim, define a list $T_i$ which contains the relevant $TT$ function. 
\item If $M_i$ is a bubble pop, we define a list $B_i$ which contains the relevant $BP$ function. 
\item If $M_i$ is a vertex spiral, we define a list $V_i$ which contains the relevant $VS^{\mathrm{left}}$ or $VS^{\mathrm{right}}$. 
\item If $M_i$ is an $F$-move, we define a list $F_i$ which contains the relevant $F^\pm$ or $G^\pm$ functions. 
\item if $M_i$ is an orientation reversal where $w_{i_1}$ is the variable associated to the edge before the operation, and $w_{i_2}$ is the variable associated to the edge after the operation, then define a list $O_i$ which contains the Kronecker delta function $\delta(w_{i_1}^*, w_{i_2})$.
\end{itemize}

For each list, we append Kronecker delta functions $\delta$ on all edges or vertices which are held constant before and after performing the given operation. E.g. if the edge associated to $w_{22}$ will be held constant after an $F$-move, and will then be associated with $w_{100}$ in the trivalent graph associated to after the operation, then we introduce $\delta(w_{22},w_{100})$ to the list $F$ associated to the operation. We may also wish to swap the orientation of an edge, in which case we may do this at the cost of treating the coloring of the new edge as the dual of the old one. This does not require any functions to implement.

If $\sigma(L)\geq 0$, we then define the set $I_M$ by:
\begin{align*}
I_M = \{\sqrt{p_{+}}^{-1}(x_0),\sqrt{p_{-}}^{-1}(x_0),\sqrt{p_{+}}^{-1}(x_1), \dots ,\sqrt{p_{+}}^{-1}(x_m), \sqrt{p_{-}}^{-1}(x_1), \dots, \sqrt{p_{-}}^{-1}(x_m),\\ \sqrt{p_{-}}(y_1), \dots, \sqrt{p_-} (y_{|\sigma(L)|}), \sqrt{p_{+}}^{-1}(y_1),\dots, \sqrt{p_{+}}^{-1}(y_{|\sigma(L)|}), d^2(x_1), \dots d^2(x_m)\}\\
 \cup\mathrm{Init}\cup R_1\cup\cdots \cup R_n \cup \bigcup_{p_1,\dots ,p_6} C_{p_1}\cup T_{p_2}\cup B_{p_3} \cup V_{p_4} \cup F_{p_5} \cup O_{p_6}
\end{align*}
where the lists $C_{p_1}$, $T_{p_2}$, $B_{p_3}$, $V_{p_4}$, $F_{p_5}$, and $O_{p_6}$ each have functions depending on the $z$, $u$, and $w$ variables where relevant.
If $\sigma(L)<0$, then replace the elements $\sqrt{p_{-}}(y_1),\dots ,\sqrt{p_-}(y_{|\sigma(L)|})$ with the inverses $\sqrt{p_-}^{-1}(y_1),\dots,\sqrt{p_-}^{-1}(y_{|\sigma(L)|})$.
By virtue of Lemma \ref{lem:trivalentgraphalg}, the construction of $I_M$ can be carried out in polynomial time in the size of $M$.

We now explain why this choice of instance $I_M$ defines an output which computes the Reshetikhin-Turaev invariant $\tau_\B(M)$.
The $\sqrt{p_+}$ and $\sqrt{p_-}$ functions at the beginning implement the normalizing factor, and the $d^2$ functions implement the product of dimensions we see in the sum.
Notice that if at any point a coloring is not admissible, the term in $I_M$ is 0.
Now, it is clear that all circle removals, edge orientation reversals, bubble pops, and tadpole trims are correctly implemented, so we just need to check that the $F$-moves, $R$-moves, and vertex spirals are correct.
In the standard algorithm, when an $F$-move occurs, there is an additional variable introduced in the summation, along with the relevant $F$-symbol. This occurs here as well, since we are summing over all edges that occur throughout the algorithm, the new edge which is created in the $F$-move will contribute to the sum.
Similarly, we see that $R$-moves and vertex spirals are correctly implemented.
Note that also there are no extraneous variables in the sum since the algorithm guarantees that the graph will become empty.
\end{proof}

\section{Discussion}
\label{sec:disc}

\subsection{Unitarity}
\label{ss:unitarity}
We note that none of our results depend on the unitarity of the spherical fusion category $\C$ or the modular fusion category $\B$.
This is not surprising, since a choice of unitary structure is not necessary to define the TQFT invariants of closed $3$-manifolds from $\C$ or $\B$, and so, as far as exact calculation of invariants is concerned, such a choice will not affect any dichotomies.
Nevertheless, a unitary structure is certainly needed in order to do topological quantum computation with the TQFT determined by $\C$ or $\B$, since, for such applications, one needs the TQFT to be unitary.
\emph{A priori}, a specific choice of unitary structure might affect the $\BQP$-universality of braiding (with or without adaptive measurement), although we expect that \emph{a posteriori}, this will not be the case.

\subsection{TQFTs in other dimensions}
\label{ss:dimension}
We furthermore note that the same strategy we used for the proof of Theorem \ref{thm:main}(a) should work more generally to prove that any fully-extended $(d+1)$-dimensional TQFT in any dimension will satisfy a similar dichotomy involving its invariants of closed $(d+1)$-dimensional manifolds, so long as the TQFT is defined using a state-sum formula based on finite combinatorial-algebraic data.
In particular, similar dichotomies should be possible for $(3+1)$-dimensional TQFTs based on spherical fusion 2-categories \cite{DouglasReutter} or lattice gauge theories based on finite groups (sometimes called Dijkgraaf-Witten theories) in arbitrary dimension.

\subsection{Alternative proof strategies}
\label{ss:alternative}
Building on the previous point, one might try to give an alternative proof of Theorem \ref{thm:main}(b) by using the $(3+1)$-dimensional Crane-Yetter TQFT based on the modular fusion category $\B$ \cite{CraneYetter}.
To put it more carefully, it is known that the Reshetikhin-Turaev invariant $\tau_\B(M)$ can be computed by choosing a triangulated 4-manifold $Y$ whose boundary is $\partial Y = M$, and computing an appropriate state-sum invariant of $Y$ (similar to the Turaev-Viro invariant of a triangulated 3-manifold) \cite{CraneYetterKauffman}.
So one could try to prove a dichotomy for the $(2+1)$-dimensional surgery-invariant case of Reshetikhin-Turaev in a simpler way by instead proving a dichotomy for the $(3+1)$-dimensional triangulation-invariant case of Crane-Yetter.
Accomplishing this would require showing that given a surgery diagram for a $3$-manifold $M$, one can build a triangulated 4-manifold $Y$ with $\partial Y = M$ in polynomial time.

In the opposite direction, it is known that for a spherical fusion category $\C$, $|M|_\C = \tau_{\mathcal{Z}(\C)}(M)$, where $\mathcal{Z}(\C)$ is the Drinfeld center of $\C$.  If one were able to efficiently convert a triangulation of a 3-manifold $M$ into a surgery presentation of the same manifold, then part (b) of Theorem \ref{thm:main} (or Conjecture \ref{con:main}) would immediately imply part (a) of the same.

\subsection{Towards Conjecture \ref{con:main}}
\label{ss:improvements}
The current gap between Theorem \ref{thm:main} and Conjecture \ref{con:main} is explained by a rather simple and undesirable property of our proof of the former: our reductions $M \mapsto I_M$ are not ``surjective" from 3-manifold encodings to instances of $\#\CSP(\F_\C)$ or $\#\CSP(\F_\B)$.
For example, it would be consistent with our results for there to exist a spherical fusion category $\C$ such that $|M|_\C$ is computable in polynomial-time, and yet, the problem $\#\CSP(\F_\C)$ is still $\shP$-hard (although we consider this unlikely).

To establish an outright dichotomy theorem for TQFT invariants via Cai and Chen's Theorem \ref{thm:dichotomy}, we would need to arrange our choices of $\F_\C$ and $\F_\B$ with more care so that every instance $I$ of $\#\CSP(\F_\C)$ or $\#\CSP(\F_\B)$ that is not of the form $I_M$ satisfies two properties: first, it can be identified in polynomial time as an instance that is not of the form $I_M$, and, second, $Z(I)$ can be computed in polynomial time.
This seems difficult to arrange, as it is not clear how to choose the constraint families $\F_\C$ and $\F_\B$ so that instances can ``self-report" as not being of the form $I_M$. 

It is instructive to compare TQFT invariants with ``holant problems" as defined in \cite{CaiETAL-holantdef} and inspired by the ``holographic reductions" of \cite{Valiant-holographic}.
Holant problems are a kind of generalization of counting CSPs that impose more structure on the way in which the individual functions comprising an instance are ``wired together."
Intuitively, an instance of $\#\CSP(\F)$ has no locality constraints on its variables, other than that the constraint functions $f \in \F$ have bounded arity (assuming $\F$ is finite).
An instance of a holant problem, on the other hand, has a set of variables that are determined by the \emph{edges of a graph} with constraint functions assigned to the \emph{vertices}.
The Turaev-Viro-Barrett-Westbury invariant of closed 3-manifolds determined by a spherical fusion category $\C$ can be seen as generalization of this idea, with variables assigned to both the edges \emph{and} faces of a 3-dimensional triangulation, and constraints assigned to the tetrahedra.
We expect it should be possible to formulate TVBW invariants of triangulated 3-manifolds directly as instances of holant problems using a similar construction as in the proof of Theorem \ref{thm:main}(a).
Of course, even if one could achieve this, such a reduction from TVBW invariants to holant problems would---\emph{a priori}---suffer in the same way as our current reduction to $\#\CSP(\F_\C)$.
Thus, it seems likely that proving Conjecture \ref{con:main} will require substantially new ideas.
Nevertheless, it appears that there could be much to gain by attempting to import what has been learned about holant problem dichotomies to TQFT invariants of 3-manifolds.


\end{document}